\newcommand{\Z}{{\mathbb Z}}
\newcommand{\G}{{\mathcal G}}
\newcommand{\M}{{\mathcal{M}}}
\newcommand{\m}{{\mathfrak{m}}}
\newcommand{\n}{{\mathfrak{n}}}
\newcommand{\I}{{\mathcal{I}}}
\newcommand{\cR}{{\mathcal{R}}}
\newcommand{\ba}{{\mathfrak{a}}}
\newtheorem{thm}{Theorem}[section]
\newtheorem{lemma}[thm]{Lemma}
\newtheorem{prop}[thm]{Proposition}
\newtheorem{cor}[thm]{Corollary}
\begin{document}
\title[Centrality of the congruence kernel]{Centrality of the congruence kernel for elementary subgroups of Chevalley groups of rank $> 1$ over noetherian rings}

\begin{abstract}
Let $G$ be a universal Chevalley-Demazure group scheme associated to a reduced irreducible root system of rank $>1.$ For a commutative ring $R$, we let $\Gamma = E(R)$ denote the elementary subgroup of the group of $R$-points $G(R).$ The congruence kernel $C(\Gamma)$ is then defined to be the kernel of the natural homomorphism $\widehat{\Gamma} \to \overline{\Gamma},$ where $\widehat{\Gamma}$ is the profinite completion of $\Gamma$ and $\overline{\Gamma}$ is the congruence completion defined by ideals of finite index. The purpose of this note is to show that for an arbitrary noetherian ring $R$ (with some minor restrictions if $G$ is of type $C_n$ or $G_2$), the congruence kernel $C(\Gamma)$ is central in $\widehat{\Gamma}.$
\end{abstract}

\author[A.S.~Rapinchuk]{Andrei S. Rapinchuk}


\address{Department of Mathematics, University of Virginia,
Charlottesville, VA 22904}

\email{asr3x@virginia.edu}

\author[I.A.~Rapinchuk]{Igor A. Rapinchuk}

\address{Department of Mathematics, Yale University, New Haven, CT 06502}

\email{igor.rapinchuk@yale.edu}

\maketitle

\section{Introduction}\label{S:I}

Let $G$ be a universal Chevalley-Demazure group scheme associated to a reduced irreducible root system $\Phi$ of rank $> 1$. Given a commutative ring $R$, we let $G(R)$ denote the group of $R$-points of $G$, and let $E(R) \subset G(R)$ be the corresponding elementary subgroup. (We recall that $E(R)$ is defined as the subgroup generated by the images $e_{\alpha} (R) =: U_{\alpha} (R)$ for all $\alpha \in \Phi$, where $e_{\alpha} \colon \mathbb{G}_a \to G$ is the canonical 1-parameter subgroup corresponding to a root $\alpha \in \Phi$ --- see \cite{Bo1} for details.) The goal of this note is to make a contribution to the analysis of the congruence subgroup problem for $E(R)$ over a general commutative noetherian ring $R$ (with some minor restrictions if $\Phi$ is of type $C_n$ $(n \geq 2)$ or $G_2$).

While the congruence subgroup problem for $S$-arithmetic groups is a well-established subject (see \cite{PR} for a recent survey), its analysis over general rings, at least from the point of view we adopt in this note, has been rather limited, despite a large number of results dealing with arbitrary normal subgroups of Chevalley groups over commutative rings. For this reason, we begin with a careful description of our set-up.
Let $R$ be a commutative ring and $n \geq 1.$ Then to every ideal $\mathfrak{a} \subset R$, one associates the congruence subgroup $GL_n (R, \mathfrak{a}) = \ker (GL_n (R) \to GL_n (R/ \mathfrak{a}))$, where the map is the one induced by the canonical homomorphism $R \to R/ \mathfrak{a}$. Clearly, if $\mathfrak{a}$ is of {\it finite index} (i.e. the quotient $R/ \mathfrak{a}$ is a finite ring), then $GL_n (R, \mathfrak{a})$ is a normal subgroup of $GL_n (R)$ of {\it finite index}. Given a subgroup $\Gamma \subset GL_n (R),$ we set $\Gamma (\mathfrak{a}) = \Gamma \cap GL_n (R, \mathfrak{a}).$ Then, by the congruence subgroup problem for $\Gamma$, we understand the following question:
\vskip3mm

(CSP) \hskip2mm \parbox{14.9cm}{Does every normal subgroup $\Delta \subset \Gamma$ of {\it finite index} contain the congruence subgroup $\Gamma (\mathfrak{a})$ for some ideal $\mathfrak{a} \subset R$ of {\it finite index}?}

\vskip3mm

\noindent The affirmative answer would give us information about the profinite completion $\widehat{\Gamma}$, which is precisely what is needed for the analysis of representations of $\Gamma$, as well as other issues (cf. \cite{BMS}, \cite{KN}, \cite{Sh}). However, even when $\Gamma$ is $S$-arithmetic, the answer to (CSP) is often negative. So one is instead interested in the computation of the congruence kernel, which measures the deviation from a~positive solution. For this, just as in the arithmetic case, we introduce two topologies on $\Gamma$: the profinite topology $\tau_p^{\Gamma}$ and the congruence topology $\tau_c^{\Gamma}.$ The fundamental system of neighborhoods of the identity for the former consists of all normal subgroups $N \subset \Gamma$ of finite index, and for the latter of the congruence subgroups $\Gamma (\mathfrak{a}),$ where $\mathfrak{a}$ runs through all ideals of $R$ of finite index. The corresponding completions are then given by
$$
\widehat{\Gamma} = \lim_{\longleftarrow} \Gamma / N, \ \ \ \text{where} \ N \lhd \Gamma \ \text{and} \ [\Gamma :N ] < \infty
$$
and
$$
\overline{\Gamma} = \lim_{\longleftarrow} \Gamma / \Gamma (\mathfrak{a}), \ \ \ \text{where} \ \vert R/ \mathfrak{a} \vert < \infty.
$$
As $\tau_p^{\Gamma}$ is stronger than $\tau_c^{\Gamma}$, there exists a continuous surjective homomorphism $\pi^{\Gamma} \colon \widehat{\Gamma} \to \overline{\Gamma},$ whose kernel is called the {\it congruence kernel} and denoted $C(\Gamma).$ Clearly, $C(\Gamma)$ is trivial if and only if the answer to (CSP) is affirmative; in general, its size measures the extent of deviation from the affirmative answer. Unfortunately, as remarked above, in many situations, $C(\Gamma)$ is nontrivial, and the focus of this note is on a different property, viz. the {\it centrality} of $C(\Gamma)$ (which means that $C(\Gamma)$ is contained in the center of $\widehat{\Gamma}$). We note that in some cases, centrality is almost as good as triviality (cf. \cite{KN}, \cite{Sh}), and in arithmetic cases actually implies the finiteness of $C (\Gamma).$

Returning to Chevalley groups, we observe that congruence subgroups $G(R, \mathfrak{a}) \subset G(R)$ can be defined either as pullbacks of the congruence subgroups $GL_n (R, \mathfrak{a})$ under a faithful representation of group schemes $G \hookrightarrow GL_n$ over $\Z$, or, intrinsically, as the kernel of the natural homomorphism $G(R) \to G(R/\mathfrak{a}).$

Our main result concerns the congruence kernel of the elementary group $\Gamma = E(R).$ We note that the congruence topology on $\Gamma$ is induced by that on $G(R)$, i.e. is defined by the intersections $\Gamma \cap G(R, \mathfrak{a})$, where $\mathfrak{a}$ runs over all ideals $\mathfrak{a} \subset R$ of finite index. On the other hand, the profinite topology on $\Gamma$ may {\it a priori} be different from the topology induced by the profinite topology of $G(R)$ (cf. the remarks at the end of \S 4). 

\vskip2mm

\noindent {\bf Main Theorem.} {\it Let $G$ be a universal Chevalley-Demazure group scheme corresponding to a reduced irreducible root system $\Phi$ of rank $>1.$ Furthermore, let $R$ be a noetherian commutative ring such that $2 \in R^{\times}$ if $\Phi$ is of type $C_n$ ($n \geq 2$) or $G_2$, and let $\Gamma = E(R)$ be the corresponding elementary subgroup. Then the congruence kernel $C(\Gamma)$ is central.}

\vskip2mm

The centrality of the congruence kernel for $SL_n$ ($n \geq 3$) and $Sp_{2n}$ ($n \geq 2$) over rings of algebraic integers was proved by Bass, Milnor, and Serre \cite{BMS}. Their result was generalized to arbitrary Chevalley groups of rank $> 1$ over rings of algebraic integers by Matsumoto \cite{M1}. The only known result for general rings is due to Kassabov and Nikolov \cite{KN}, where centrality was established for $SL_n (\Z [x_1, \dots, x_k])$, with $n \geq 3$, and hence for the elementary group $E_n (R)$ over any finitely generated ring $R$, using $K$-theoretic methods. Although our proof shares some elements with the argument in \cite{KN}, it is purely group-theoretic and is inspired by the proof of centrality for $SL_n$ ($n \geq 3$) over arithmetic rings given in \cite{AR}; in addition, we do not use any results of Matsumoto \cite{M1}.

\vskip2mm

\noindent {\bf Conventions and notations.} All of our rings will be assumed to be commutative and unital. Unless explicitly stated otherwise, $G$ will always denote a universal Chevalley-Demazure group scheme corresponding to a reduced irreducible root system $\Phi$ of rank $> 1$. Furthermore, if $R$ is a commutative ring, then for a subgroup $\Gamma \subset G(R)$, we let $\widehat{\Gamma}$ and $\overline{\Gamma}$ denote the profinite and congruence completions of $\Gamma$, respectively. 

\section{Structure of $\overline{G(R)}$}

Let $\mathcal{I}$ be the set of all ideals $\mathfrak{a} \subset R$ of finite index, and let $\mathcal{M} \subset \mathcal{I}$ be the subset of maximal ideals. It is not difficult to see (cf. the proof of Proposition \ref{P-2}) that $\overline{G(R)}$ can be identified with the closure of the image of $G(R)$ in $G(\widehat{R})$, where
$$
\widehat{R} = {\lim_{\longleftarrow}}_{\mathfrak{a} \in \I} R/ \mathfrak{a}
$$
is the profinite completion of $R$. The proof of the Main Theorem relies on the fact that $G(\widehat{R})$ has the bounded generation property with respect to the set $\widehat{S} = \{ e_{\alpha} (t) \mid t \in \widehat{R}, \ \alpha \in \Phi \}$ of elementaries, which we will establish at the end of this section (cf. Corollary \ref{C-1}). We begin, however, by describing the structure of $\widehat{R}$ itself. For each $\mathfrak{m} \in \M$, we let $$R_{\mathfrak{m}} = \lim_{\longleftarrow} R/ \mathfrak{m}^n$$ denote the $\m$-adic completion of $R$ (cf. \cite{At}, Chapter 10).

\begin{lemma}\label{L-1}
Let $R$ be a noetherian ring.

\vskip1mm

\noindent {\rm (1)} \parbox[t]{15cm}{There exists a natural isomorphism of topological rings
$$
\widehat{R} = \prod_{\m \in \M} R_{\m}.
$$}

\vskip1mm

\noindent {\rm (2)} \parbox[t]{15cm}{Each $R_{\m}$ is a complete local ring.}
\end{lemma}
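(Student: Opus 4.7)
The plan is to exhibit a cofinal subsystem of $\I$ consisting of products of powers of maximal ideals of finite index, decompose the corresponding quotients via the Chinese Remainder Theorem, and then interchange the resulting inverse limits to obtain the asserted product decomposition.

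For part (1), I would first establish cofinality. Given $\mathfrak{a} \in \I$, the quotient $R/\mathfrak{a}$ is finite, hence Artinian; consequently its Jacobson radical is nilpotent and there are only finitely many maximal ideals $\bar{\m}_1, \dots, \bar{\m}_k$ in $R/\mathfrak{a}$. Pulling these back to $R$ produces finitely many maximal ideals $\m_1, \dots, \m_k \in \M$ (each of finite index, since each contains $\mathfrak{a}$) together with an integer $N \geq 1$ such that $\m_1^N \cdots \m_k^N \subseteq \mathfrak{a}$. Hence the ideals $\mathfrak{b}(F, N) := \prod_{\m \in F} \m^N$, with $F \subset \M$ finite and $N \geq 1$, form a cofinal family in $\I$. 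Since distinct elements of $\M$ are pairwise comaximal, the Chinese Remainder Theorem yields a canonical isomorphism $R/\mathfrak{b}(F, N) \cong \prod_{\m \in F} R/\m^N$. Passing to the inverse limit, and exchanging the limit over $N$ (inside each factor) with the limit over finite $F \subset \M$ (which is, by definition, the full direct product), one obtains
\[
\widehat{R} \;=\; \lim_{\longleftarrow} R/\mathfrak{b}(F, N) \;\cong\; \prod_{\m \in \M} R_{\m},
\]
with the topological identification being automatic, since the transition maps factor through finite subproducts.

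For part (2), the completeness of $R_{\m}$ in its $\m$-adic topology is immediate from the construction as an inverse limit along surjective transitions. To see locality, I would consider the ideal $\widehat{\m} := \ker(R_{\m} \to R/\m)$, which is maximal with residue field $R/\m$, and verify that any $x \in R_{\m} \setminus \widehat{\m}$ is a unit: the image of $x$ in each $R/\m^n$ lies outside the unique maximal ideal $\m/\m^n$ of the local ring $R/\m^n$ and is therefore invertible there; the resulting inverses are compatible under the projection maps, so they assemble to an inverse for $x$ in $R_{\m}$.

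The only step requiring genuine input beyond formal inverse-limit manipulations is the cofinality claim, which is precisely where the noetherian hypothesis enters (via the Artinian nature of finite quotients, which ensures nilpotence of the Jacobson radical and finiteness of the maximal spectrum of $R/\mathfrak{a}$). Once this is in place, both assertions follow from routine bookkeeping.
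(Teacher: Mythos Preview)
Your argument is correct in outline and runs parallel to the paper's: both rest on the Chinese Remainder Theorem together with the observation that every $\mathfrak{a}\in\I$ contains a product $\m_1^{\beta_1}\cdots\m_r^{\beta_r}$ with $\m_i\in\M$. The paper packages this by constructing the map $\widehat{R}\to\prod_{\m}R_{\m}$ explicitly and then checking surjectivity (via CRT plus compactness) and injectivity (via the Artinian structure of $R/\mathfrak{a}$) separately, whereas you phrase the same content as cofinality of the family $\{\mathfrak{b}(F,N)\}$ in $\I$ and then interchange limits. These are organizationally different but mathematically equivalent. For part~(2) the paper simply cites Atiyah--MacDonald, while you give the standard direct argument; both are fine.

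One point deserves correction. You locate the noetherian hypothesis in ``the Artinian nature of finite quotients,'' but any finite ring is Artinian, so that step needs no hypothesis on $R$. Where noetherianity actually enters---and the paper makes this explicit at the very start of its proof---is in showing that $R/\m^n$ is \emph{finite} for $\m\in\M$ and $n\geq 1$: since $R$ is noetherian, $\m^{n-1}/\m^n$ is a finitely generated module over the finite field $R/\m$, hence finite, and one inducts. Without this, your ideals $\mathfrak{b}(F,N)$ need not belong to $\I$ at all, so the cofinality assertion is not even well-posed (containing some $\mathfrak{b}(F,N)\subseteq\mathfrak{a}$ does not help unless $\mathfrak{b}(F,N)$ itself has finite index). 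Once you insert this one-line verification, your proof is complete.
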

\begin{proof}
(1) Since $R$ is noetherian, for any $\mathfrak{a} \in \I$ and any $n \geq 2$, the quotient $\mathfrak{a}^{n-1}/ \mathfrak{a}^n$ is a finitely generated $R/ \mathfrak{a}$-module, hence finite. It follows that $R/ \mathfrak{a}^n$ is finite for any $n \geq 1.$ In particular, for any $\m \in \M$ and $n \geq 1,$ there exists a natural continuous surjective projection
$$
\rho_{\m, n} \colon \widehat{R} \to R/ \m^n.
$$
For a fixed $\m$, the inverse limit of the $\rho_{\m, n}$ over all $n \geq 1$ yields a continuous ring homomorphism $\rho_{\m} \colon \widehat{R} \to R_{\m}.$ Taking the direct product of the $\rho_{\m}$ over all $\m \in \M,$ we obtain a continuous ring homomorphism
$$
\rho \colon \widehat{R} \to \prod_{\m \in \M} R_{\m} =: \overline{R}.
$$
We claim that $\rho$ is the required isomorphism.

Note that ideals of the form
$$
\overline{\ba} = \m_1^{\alpha_1} R_{\m_1} \times \cdots \times \m_n^{\alpha_n} R_{\m_n} \times \prod_{\m \neq \m_i} R_{\m},
$$
where $\{ \m_1, \dots, \m_n \} \subset \M$ is a finite subset and $\alpha_i \geq 1,$ form a base of neighborhoods of zero in $\overline{R}$, with
$$
\overline{R}/ \overline{\ba} = R / \m_1^{\alpha_1} \times \cdots \times R/ \m_{n}^{\alpha_n}
$$
(cf. \cite{At}, Proposition 10.15). Set $\ba = \m_1^{\alpha_1} \cdots \m_n^{\alpha_n}.$ By the Chinese Remainder Theorem,
$$
R / \ba \simeq R/ \m_1^{\alpha_1} \times \cdots \times R/ \m_n^{\alpha_n},
$$
which implies that the composite map
$$
\widehat{R} \to \overline{R} \to \overline{R}/ \overline{\ba}
$$
is surjective. Since this is true for all $\overline{\ba},$ we conclude that the image of $\rho$ is dense. On the other hand, $\widehat{R}$ is compact, so the image is closed, and we obtain that $\rho$ is in fact surjective.

To prove the injectivity of $\rho$, we observe that for any $\ba \in \I$, the quotient $R/ \ba$, being a finite, hence artinian ring, is a product of finite local ring $R_1, \dots, R_r$ (\cite{At}, Theorem 8.7). Furthermore, for each maximal ideal $\n_i \subset R_i$, there exists $\beta_i \geq 1$ such that $\n_i^{\beta_i} = 0$ (cf. \cite{At}, Proposition 8.4). Letting $\m_i$ denote the pullback of $\n_i$ in $R$, we obtain that $\ba$ contains $\mathfrak{b} := \m_1^{\beta_1} \cdots \m_r^{\beta_r} \in \I.$ It follows that any nonzero $x \in \widehat{R}$ will have a nonzero projection to some $R/ \mathfrak{b} = R/ \m_1^{\beta_1} \times \cdots \times R/ \m_r^{\beta_r}$, and hence to some $R_{\m_i}$, as required.

\vskip1mm

\noindent (2) It is well-known that $R_{\m}$ is both complete and local (cf. \cite{At}, Propositions 10.5 and 10.16).
\end{proof}

As a first step towards establishing bounded generation of $G(\widehat{R})$ with respect to the set of elementaries, we prove

\begin{prop}\label{P-1}
There exists an integer $N = N(\Phi)$, depending only on the root system $\Phi$, such that for any commutative local ring $\cR$, any element of $G(\cR)$ is a product of $\leq N$ elements of $S = \{ e_{\alpha} (r) \mid r \in \cR, \ \alpha \in \Phi \}.$
\end{prop}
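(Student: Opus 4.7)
The plan is to split any $g \in G(\cR)$ into its image modulo the maximal ideal $\m$, handled by the Bruhat decomposition over the residue field $k = \cR/\m$, and a residual factor in the principal congruence subgroup $G(\cR, \m)$, handled by the big Gauss cell $\Omega = U^- \cdot T \cdot U^+$. Locality of $\cR$ is what forces the second piece to land in $\Omega(\cR)$.

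First, I fix an ordering of $\Phi^+$ and, for each element $w$ of the Weyl group $W$, a reduced expression in the simple reflections. Over the field $k$: any element of $U^\pm(k)$ is an ordered product of $|\Phi^+|$ root elementaries; any Weyl representative $n_w$ is a product of at most $|\Phi^+|$ factors $w_\alpha(1) = e_\alpha(1) e_{-\alpha}(-1) e_\alpha(1)$, hence of at most $3|\Phi^+|$ elementaries; and, since $G$ is universal, $T \simeq \mathbb{G}_m^\ell$ via the simple coroots, so every $t \in T(k)$ factors as $\prod_{i=1}^\ell h_{\alpha_i}(c_i)$ with each $h_\alpha(c) = w_\alpha(c) w_\alpha(-1)$ contributing $6$ elementaries. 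Combining these counts with the Bruhat decomposition $G(k) = \bigsqcup_{w \in W} T(k) \cdot U^+(k) \cdot n_w \cdot U^+(k)$ yields a bound $N_0 = N_0(\Phi)$ on the elementary length of any element of $G(k)$.

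Next, since $\cR \twoheadrightarrow k$, each of the $\le N_0$ elementary factors of $\bar g$ lifts to an elementary in $\cR$; the resulting product $g'$ is a word of length $\le N_0$ in $S$ with $\overline{g'} = \bar g$, so $h := g (g')^{-1}$ lies in $G(\cR, \m) := \ker(G(\cR) \to G(k))$. To bound the length of $h$, I invoke the big cell: $\Omega \subset G$ is an open subscheme for which the product morphism $U^- \times T \times U^+ \to \Omega$ is an isomorphism of schemes. The morphism $h \colon \operatorname{Spec}\cR \to G$ sends the unique closed point of $\operatorname{Spec}\cR$ to $1 \in \Omega(k)$, so $h^{-1}(\Omega)$ is an open subset of $\operatorname{Spec}\cR$ containing the closed point; since $\cR$ is local, $h^{-1}(\Omega) = \operatorname{Spec}\cR$, i.e.\ $h \in \Omega(\cR) = U^-(\cR) \cdot T(\cR) \cdot U^+(\cR)$. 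The counts above then bound the length of $h$ by $2|\Phi^+| + 6\ell$, so overall $g$ is a product of at most $N := N_0 + 2|\Phi^+| + 6\ell$ elementaries, a bound depending only on $\Phi$.

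The main obstacle is the final step, where the scheme-theoretic openness of $\Omega$ must be converted, via locality of $\cR$, into the concrete factorization $h = u^- t u^+$ in $G(\cR)$. Everything else is essentially bookkeeping with the Bruhat decomposition and the standard Steinberg identities for $w_\alpha(t)$ and $h_\alpha(t)$.
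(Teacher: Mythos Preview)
Your proof is correct and follows essentially the same strategy as the paper: reduce modulo $\m$, handle $G(k)$ via the Bruhat decomposition with a bound $N_0(\Phi)$, lift to an elementary word $g'$, and show the residual $h = g(g')^{-1}$ lies in the big cell $\Omega(\cR)$ because $\cR$ is local. The only cosmetic differences are that the paper phrases the locality step algebraically (writing $\Omega$ as the principal open defined by some $d \in \Z[G]$ and observing $d(h) \notin \m$ forces $d(h) \in \cR^{\times}$), whereas you phrase it scheme-theoretically via $h^{-1}(\Omega)$, and the paper uses a four-term $SL_2$ identity for torus elements rather than your six-term $h_\alpha(c) = w_\alpha(c)w_\alpha(-1)$.
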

\begin{proof}
Fix a system of simple roots $\Pi \subset \Phi,$ and let $\Phi^+$ and $\Phi^-$ be the corresponding sets of positive and negative roots. Let $T \subset G$ be the canonical maximal torus, and $U^+$ and $U^-$ be the canonical unipotent $\Z$-subschemes corresponding to $\Phi^+$ and $\Phi^-.$ It is well-known (see, for example, \cite{Bo1}, Lemma 4.5) that the product map $\mu \colon U^- \times T \times U^+ \to G$ is an isomorphism onto a principal open subscheme $\Omega \subset G$ defined by some $d \in \Z[G].$ We have decompositions
$$
U^{\pm} = \prod_{\alpha \in \Phi^{\pm}} U_{\alpha} \ \ \ \text{and} \ \ \ T = \prod_{\alpha \in \Pi} T_{\alpha},
$$
where $T_{\alpha}$ is the maximal diagonal torus in $G_{\alpha} = <U_{\alpha}, U_{-\alpha}> = SL_2.$ So, the identity
$$
\left( \begin{array}{cl} a & 0 \\ 0 & a^{-1} \end{array} \right) = \left( \begin{array}{lr} 1 & -1 \\ 0 & 1 \end{array} \right) \left( \begin{array}{cc} 1 & 0 \\ 1-a & 1 \end{array} \right) \left( \begin{array}{ll} 1 & a^{-1} \\ 0 & 1 \end{array} \right) \left( \begin{array}{cc} 1 & 0 \\ a(a-1) & 1 \end{array} \right)
$$
shows that there exists $N_1 = N_1 (\Phi)$ such that any element of $\Omega (R)$ is a product of $\leq N_1$ elementaries, for {\it any} ring $\cR.$

On the other hand, it follows from the existence of the Bruhat decomposition in Chevalley groups over fields that there exists $N_2 = N_2 (\Phi)$ such that any element of $G(k)$ is a product of $\leq N_2$ elementaries, for any field $k.$ We will now show that $N:= N_1 + N_2$ has the required property for any local ring $\cR.$ Indeed, let $\m \subset \cR$ be the maximal ideal, and $k = \cR/ \m$ be the residue field. As $G(k)$ is generated by elementaries, the canonical homomorphism $\omega \colon G(\cR) \to G(k)$ is surjective. Given $g \in G(\cR)$, there exists $h \in G(\cR)$ that is a product of $\leq N_2$ elementaries and for which we have $\omega (g) = \omega (h).$ Then, for $t = gh^{-1}$, we have $\omega (t) = 1$ (in particular, $\omega (t) \in \Omega (k)$), and therefore $d(t) \not\equiv 0 (\text{mod} \ \m)$. Since $\cR$ is local, this means that $d(t) \in \cR^{\times}$, and therefore $t \in \Omega (\cR)$. Thus, $t$ is a product of $\leq N_1$ elementaries, and the required fact follows.
\end{proof}

Next, we have the following
\begin{lemma}\label{L-2}
Let $\cR_i$ ($i \in I$) be a family of commutative rings such that there exists an integer $N$ with the property that for any $i \in I,$ any $x_i \in G(\cR_i)$ is a product of $\leq N$ elementaries. Set $\cR = \prod_{i \in I} \cR_i.$ Then any $x \in G(\cR)$ is a product of $\leq N \cdot \vert \Phi \vert$ elementaries.
\end{lemma}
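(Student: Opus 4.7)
The plan is to exploit the fact that $G$ is an affine group scheme, which gives the canonical identification
$$
G(\cR) = G\Bigl(\prod_{i \in I} \cR_i\Bigr) = \prod_{i \in I} G(\cR_i)
$$
(a ring homomorphism from the coordinate ring $\Z[G]$ into $\prod \cR_i$ is the same as a family of ring homomorphisms $\Z[G] \to \cR_i$). So every $x \in G(\cR)$ is a tuple $x = (x_i)_{i \in I}$, and by hypothesis each coordinate can be expanded as
$$
x_i = e_{\alpha_{i,1}}(t_{i,1}) \cdots e_{\alpha_{i,N}}(t_{i,N})
$$
for some roots $\alpha_{i,j} \in \Phi$ and elements $t_{i,j} \in \cR_i$, where we pad with trivial factors (using $e_{\alpha}(0) = 1$) so that the length is uniformly $N$.

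The difficulty, of course, is that the sequence of roots $(\alpha_{i,1}, \dots, \alpha_{i,N})$ will in general depend on $i$, so one cannot simply collate coordinates into a single length-$N$ product over $\cR$. To fix this, I would fix an arbitrary enumeration $\Phi = \{\beta_1, \dots, \beta_{|\Phi|}\}$ and, for each position $j \in \{1, \dots, N\}$ and each root $\beta_k \in \Phi$, define an element $t_{j,k} \in \cR$ coordinate-wise by
$$
(t_{j,k})_i \;=\; \begin{cases} t_{i,j} & \text{if } \alpha_{i,j} = \beta_k, \\ 0 & \text{otherwise.} \end{cases}
$$
Then I would consider the candidate product
$$
y \;=\; \prod_{j=1}^{N} \prod_{k=1}^{|\Phi|} e_{\beta_k}(t_{j,k}) \;\in\; G(\cR),
$$
a product of $N \cdot |\Phi|$ elementaries in $\cR$.

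The key observation, and the only thing that needs verification, is that $y = x$. Project onto the $i$-th coordinate: within the inner block indexed by a fixed $j$, all factors with $\beta_k \neq \alpha_{i,j}$ become $e_{\beta_k}(0) = 1$, so the block collapses to $e_{\alpha_{i,j}}(t_{i,j})$ regardless of the ordering of $\Phi$. Multiplying over $j$ recovers precisely $x_i$. Since this holds for every $i \in I$, we conclude $y = x$ in $G(\cR)$, which gives the claimed bound.

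There is no real obstacle here beyond setting up the bookkeeping carefully; the factor $|\Phi|$ in the bound is exactly the price paid for the fact that different coordinates may use different roots at the same position, and the argument is tight in that sense.
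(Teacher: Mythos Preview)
Your proof is correct and follows essentially the same approach as the paper's. The paper's argument is the one-line observation that a tuple $(e_{\alpha_i}(r_i))_{i\in I}$ of elementaries with varying roots can be rewritten as $\prod_{\alpha\in\Phi} e_{\alpha}(t_{\alpha})$ for suitable $t_{\alpha}\in\cR$; you carry out exactly this observation, but make the bookkeeping (padding to length $N$, defining $t_{j,k}$ coordinate-wise, and verifying the product by projecting to each factor) explicit.
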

\begin{proof}
It is enough to observe that any element of the form
$$
(e_{\alpha_i} (r_i)) \in G(\cR) = \prod_{i \in I} G(\cR_i),
$$
with $\alpha_i \in \Phi,$ $r_i \in \cR_i$, can be written as
$$
\prod_{\alpha \in \Phi} e_{\alpha} (t_{\alpha})
$$
for some $t_{\alpha} \in \cR.$
\end{proof}
Using this result, together with Lemma \ref{L-1} and Proposition \ref{P-1}, we obtain
\begin{cor}\label{C-1}
Let $R$ be a noetherian ring. Then there exists an integer $M > 0$ such that any element of $G(\widehat{R})$ is a product of $\leq M$ elementaries from the set $\widehat{S} = \{e_{\alpha} (t) \mid t \in \widehat{R}, \alpha \in \Phi \}.$
\end{cor}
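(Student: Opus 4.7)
The proof is a straightforward concatenation of the three preceding results of this section, so my plan is essentially to invoke them in the correct order. First I would appeal to Lemma \ref{L-1}(1) to identify $\widehat{R}$ with the product $\prod_{\m \in \M} R_{\m}$, and to Lemma \ref{L-1}(2) to ensure that each factor $R_{\m}$ is a (complete) local ring. Since $G$ is an affine group scheme over $\Z$, it commutes with arbitrary direct products of rings, giving the natural identification
$$
G(\widehat{R}) \;\cong\; \prod_{\m \in \M} G(R_{\m}).
$$

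Next, I would apply Proposition \ref{P-1} to each local ring $R_{\m}$ individually: it produces an integer $N = N(\Phi)$, depending \emph{only} on the root system $\Phi$, such that every element of $G(R_{\m})$ is a product of at most $N$ elementaries. The decisive feature here is the uniformity of $N$ as $\m$ varies over $\M$; this is exactly what Proposition \ref{P-1} supplies, and it is what makes the passage to the infinite product possible. Finally, Lemma \ref{L-2}, applied to the family $\{R_{\m}\}_{\m \in \M}$, upgrades the componentwise bound to a uniform bound on the product ring $\widehat{R}$: any element of $G(\widehat{R})$ can be written as a product of at most $M := N \cdot |\Phi|$ elementaries drawn from $\widehat{S}$.

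There is essentially no serious obstacle here, since the entire statement is proved by quoting the three previous results in the correct order. The one point worth verifying carefully is that the elementaries produced in the final step genuinely lie in $\widehat{S}$, i.e.\ are of the form $e_{\alpha}(t)$ with $t \in \widehat{R}$ and $\alpha \in \Phi$, rather than just in the product of the local sets $\{e_{\alpha}(r_{\m})\}$; this is precisely the content of the one-line observation in the proof of Lemma \ref{L-2}, which uses that any tuple $(e_{\alpha_{\m}}(r_{\m}))_{\m} \in \prod_{\m} G(R_{\m})$ can be regrouped, root by root, as a product $\prod_{\alpha \in \Phi} e_{\alpha}(t_{\alpha})$ with $t_{\alpha} \in \widehat{R}$.
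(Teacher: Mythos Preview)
Your proposal is correct and follows exactly the paper's approach: the corollary is stated immediately after Lemma~\ref{L-2} with only the remark that it follows from Lemma~\ref{L-1}, Proposition~\ref{P-1}, and Lemma~\ref{L-2}, which is precisely the chain of citations you give. Your write-up is in fact more explicit than the paper's, in particular in noting the identification $G(\widehat{R}) \cong \prod_{\m \in \M} G(R_{\m})$ and in stressing the uniformity of $N$ in Proposition~\ref{P-1}.
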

As we noted earlier, one can identify the congruence completion $\overline{G(R)}$ with the closure of the image of $G(R)$ in $G(\widehat{R})$. The following proposition gives more precise information.

\begin{prop}\label{P-2}
Let $R$ be a noetherian ring. Then $\overline{E(R)} = \overline{G(R)}$ can be naturally identified with $G(\widehat{R})$. Furthermore, there exists an integer $M > 0$ such that any element of $\overline{E(R)} = \overline{G(R)}$ is a product of $\leq M$ elements of the set $\overline{S} := \overline{ \{ e_{\alpha} (r) \mid \alpha \in \Phi, r \in R \} } $ (closure in the congruence topology).
\end{prop}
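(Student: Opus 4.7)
The plan is to split the statement into three pieces: (i) identify $\overline{G(R)}$ with $G(\widehat{R})$; (ii) show $\overline{E(R)} = \overline{G(R)}$; and (iii) derive the bounded generation by $\overline{S}$ from Corollary \ref{C-1}.

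For (i), I would use that $G$ is an affine $\Z$-scheme, so its functor of points commutes with inverse limits of rings, giving a natural bijection $G(\widehat{R}) = \lim_{\longleftarrow} G(R/\mathfrak{a})$ over $\mathfrak{a} \in \I$. To relate this to $\overline{G(R)} = \lim_{\longleftarrow} G(R)/G(R, \mathfrak{a})$, I need the canonical maps $G(R) \to G(R/\mathfrak{a})$ to be surjective for every $\mathfrak{a} \in \I$. Here I would use that $R/\mathfrak{a}$ is finite, hence artinian, so decomposes as a product of finite local rings (\cite{At}, Theorem 8.7); Proposition \ref{P-1} applied to each local factor then gives $G(R/\mathfrak{a}) = E(R/\mathfrak{a})$, and $E(R)$ clearly surjects onto this target since every generator $e_{\alpha}(\bar{r})$ is the image of $e_{\alpha}(r) \in E(R)$.

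For (ii), the same surjectivity of $E(R) \to G(R/\mathfrak{a})$ implies $E(R) \cdot G(R, \mathfrak{a}) = G(R)$ for each $\mathfrak{a} \in \I$, and hence $E(R)/E(R, \mathfrak{a}) \cong G(R)/G(R, \mathfrak{a})$; passing to the inverse limit yields $\overline{E(R)} = \overline{G(R)}$, and combining with (i) identifies both with $G(\widehat{R})$ as topological groups (the topologies matching since both are defined by the projections onto the finite quotients $G(R/\mathfrak{a})$).

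For (iii), the density of $R$ in $\widehat{R}$ (which follows at once from the surjectivity of the projections $R \to R/\mathfrak{a}$) combined with the continuity on $\widehat{R}$-points of each morphism $e_{\alpha} \colon \mathbb{G}_a \to G$ shows that every element $e_{\alpha}(t) \in \widehat{S}$ is a limit of elements of $\{ e_{\alpha}(r) : r \in R \}$, hence lies in $\overline{S}$. Thus $\widehat{S} \subseteq \overline{S}$, and Corollary \ref{C-1}, which provides the bound $M$ in terms of products of elements of $\widehat{S}$, immediately delivers the same bound in terms of $\overline{S}$. The main technical point is the surjectivity of $G(R) \to G(R/\mathfrak{a})$ that underlies both (i) and (ii); once this is available via the artinian decomposition and Proposition \ref{P-1}, the remaining ingredients are essentially formal.
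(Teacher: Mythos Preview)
Your argument is correct and uses the same underlying ingredients as the paper (Proposition~\ref{P-1} and the local structure of the finite quotients), but the organization is a bit different. The paper does not work level by level: it simply notes that the natural map $\omega \colon \overline{G(R)} \to G(\widehat{R})$ is injective, checks that $\omega(\overline{S}) = \widehat{S}$, and then invokes Corollary~\ref{C-1} directly to conclude that $\widehat{S}$ already generates $G(\widehat{R})$; this forces $\omega$ to be surjective and yields $\overline{E(R)} = \overline{G(R)} = G(\widehat{R})$ together with the bounded generation in one stroke. Your route instead establishes surjectivity of $G(R) \to G(R/\mathfrak{a})$ at each finite stage via the artinian decomposition of $R/\mathfrak{a}$ combined with Proposition~\ref{P-1}, and only afterwards appeals to Corollary~\ref{C-1} for the bound $M$. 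Your version makes the identification $\overline{G(R)} \simeq G(\widehat{R})$ logically independent of Corollary~\ref{C-1} and is perhaps more transparent about where surjectivity comes from; the paper's version is slightly more economical in that it extracts both conclusions from a single application of Corollary~\ref{C-1}.
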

\begin{proof}
For any $\mathfrak{a} \in \I$, there exists a natural injective homomorphism $\omega_{\mathfrak{a}} \colon G(R) / G(R, \mathfrak{a}) \to G(R/ \mathfrak{a}),$ where as before, $G(R, \mathfrak{a})$ is the principal congruence subgroup of level $\mathfrak{a}.$ Taking the inverse limit over all $\mathfrak{a} \in \I,$ we obtain a continuous injective homomorphism
$$
\omega \colon \overline{G(R)} \to G(\widehat{R}).
$$
Clearly, the image of $\omega$ coincides with the closure of the image of the natural homomorphism $G(R) \to G(\widehat{R})$. 
From the definitions, one easily sees that if $\overline{e_{\alpha} (r)}$ is the image of $e_{\alpha} (r)$ ($\alpha \in \Phi, r \in R$) in $\overline{G(R)}$, then
$$
\omega (\overline{e_{\alpha} (r)}) = e_{\alpha} (\hat{r}),
$$
where $\hat{r}$ is the image of $r$ in $\widehat{R}.$ It follows that $\omega$ maps $\overline{S}$ onto $\widehat{S} = \{ e_{\alpha} (t) \mid \alpha \in \Phi, \ t \in \widehat{R} \}.$ Since by Corollary \ref{C-1}, $\widehat{S}$ generates $G(\widehat{R}),$ we obtain that $\omega (\overline{E(R)}) = G(\widehat{R})$, and consequently $\omega$ identifies $\overline{E(R)} = \overline{G(R)}$ with $G(\widehat{R})$. Furthermore, if $M$ is the same integer as in Corollary \ref{C-1}, then since every element of $G(\widehat{R})$ is a product of $\leq M$ elements of $\widehat{S}$, our second claim follows.
\end{proof}

\vskip1mm

\noindent {\bf Remark.} Recall that a group $\G$ is said to have {\it bounded generation} with respect to a generating set $X \subset \G$ if there exists an integer $N > 0$ such that every $g \in \G$ can be written as $g = x_1^{\varepsilon_1} \cdots x_d^{\varepsilon_d}$ with $x_i \in X$, $d \leq N$, and $\varepsilon_i = \pm 1.$ It follows from the Baire category theorem (cf. \cite{Mun}, Theorem 48.2) that if a compact topological group $\G$ is (algebraically) generated by a compact subset $X$, then in fact, $\G$ is automatically {\it boundedly} generated by $X$. Indeed, replacing $X$ by $X \cup X^{-1} \cup \{1 \},$ we may assume that $X = X^{-1}$ and $1 \in X.$ Set $X^{(n)} = X \cdots X$ ($n$-fold product). Then the fact that $\G = < X>$ means that
$$
\G = \bigcup_{n \geq 1} X^{(n)}.
$$
Since each $X^{(n)}$ is compact, hence closed, we conclude from Baire's theorem that for some $n \geq 1,$ $X^{(n)}$ contains an open set. Then $\G$ can be covered by finitely many translates of $X^{(n)}$, and therefore there exists $M > 0$ such that $X^{(M)} = \G$, as required. This remark shows, in particular, that (algebraic) generation of $\overline{G (R)}$ by $\overline{S}$, or that of $G(\widehat{R})$ by $\widehat{S},$ automatically yields bounded generation.

\vskip2mm

We would like to point out that the fact that $\overline{G(R)} = \overline{E(R)}$ is not used in the proof of the Main Theorem; all we need is that $\overline{E(R)}$ is boundedly generated by $\overline{S}.$ So, we will indicate another way to prove this based on some ideas of Tavgen (cf. \cite{Tav}, Lemma 1), which also gives an explicit bound on the constant $M$ in Proposition \ref{P-2}. First we observe that it is enough to establish the bounded generation of $E(\widehat{R})$ by $\widehat{S} = \{ e_{\alpha} (t) \mid \alpha \in \Phi, \ t \in \widehat{R} \}$ (indeed, this will show that $E(\widehat{R})$ is a continuous image of $\widehat{R}^N$ for some $N > 0$, hence compact, implying that the map $\omega$ from the proof of Proposition \ref{P-2} identifies $\overline{E(R)}$ with $E(\widehat{R})$, and also $\overline{S}$ with $\widehat{S}$). In turn, by the same argument as above, we see that to prove bounded generation of $E(\widehat{R})$, it suffices to show that there exists an integer $N >0$ depending only on $\Phi$ such that for any local ring $R$, any element of $E(R)$ is a product of $\leq N$ elementaries. We will show that in fact
\begin{equation}\label{E-BG}
E(R) = (U^+ (R) U^- (R))^4,
\end{equation}
so one can take $N = 4 \cdot \vert \Phi \vert.$ Let us now prove (\ref{E-BG}) by induction on the rank $\ell$ of $\Phi$. If $\ell = 1,$ then $G = SL_2$, and one easily checks that $$G(R) = E(R) = (U^+ (R) U^- (R))^4.$$ Now, we assume that (\ref{E-BG}) is valid for every reduced irreducible root system of rank $\leq \ell-1$, with $\ell \geq 2$, and prove it for a root system $\Phi$ of rank $\ell.$ Set $X = (U^+(R) U^-(R))^4$, and let $\Delta \subset \Phi$ be a system of simple roots. Since the group $E(R)$ is generated by $e_{\pm \beta} (t)$ for $\beta \in \Delta$ and $t \in R$ (cf. the proof of (\ref{E:St-1}) in \S 4), to prove (\ref{E-BG}), it suffices to show that
$$
e_{\pm \beta} (t) X \subset X.
$$
Pick $\alpha \in \Delta,$ $\alpha \neq \beta$, that corresponds to an extremal node in the Dynkin diagram of $\Phi.$ Let $\Phi_0$ (resp., $\Phi_1$) be the set of roots in $\Phi$ that do not contain (resp., contain) $\alpha$, and let $\Phi_i^{\pm} = \Phi_i \cap \Phi^{\pm}.$ Then $\Phi_0$ is an irreducible root system having $\Delta_0 = \Delta \setminus \{ \alpha \}$ as a system of simple roots; in particular, $\Phi_0$ has rank $\ell - 1.$ If we let $G_0$ denote the corresponding universal Chevalley-Demazure group scheme, then by the induction hypothesis
$$
E_0 (R) = (U_0^+ (R) U_0^- (R))^4,
$$
with the obvious notations. Let $U_1^{\pm} (R)$ be the subgroup generated by $e_{\alpha} (r)$ for $\alpha \in \Phi_1^+$ (resp., $\alpha \in \Phi_1^-$) and $r \in R.$ Then
$U^{\pm} (R) = U_0^{\pm} (R) U_1^{\pm} (R)$, and according to (\cite{Stb}, Lemma 17),
$$
U_0^{\pm} (R) U_1^{\mp} (R) = U_1^{\mp} (R) U_0^{\pm} (R).
$$
So,
$$
X = (U_0^+ (R) U_1^+ (R) U_0^- (R) U_1^- (R))^4 = (U_0^+ (R) U_0^- (R))^4 (U_1^+ (R) U_1^- (R))^4 = E_0 (R) (U_1^+ (R) U_1^- (R))^4.
$$
Since $e_{\pm \beta} (t) \in E_0 (R),$ we obtain that
$$
e_{\pm \beta} (t) X = e_{\pm \beta} (t) E_0 (R) (U_1^+ (R) U_1^- (R))^4 = X,
$$
as required.

\section{Profinite and congruence topologies coincide on 1-parameter root subgroups}

\begin{prop}\label{P-3}
Let $\Phi$ be a reduced irreducible root system of rank $\geq 2$, $G$ be the corresponding universal Chevalley-Demazure group scheme, and $E(R)$ be the elementary subgroup of the group $G(R)$ over a commutative ring $R$. Furthermore, suppose $N \subset E(R)$ is a normal subgroup of finite index. If $\Phi$ is not of type $C_n$ $(n \geq 2)$ or $G_2$, then there exists an ideal $\ba \subset R$ of finite index such that \begin{equation}\label{E-Ideal}
e_{\alpha} (\ba) \subset N \cap U_{\alpha} (R)
\end{equation}
for all $\alpha \in \Phi$, where $e_{\alpha} (\ba) = \{ e_{\alpha} (t) \mid t \in \ba \}.$ The same conclusion holds for $\Phi$ of type $C_n$ $(n \geq 2)$ and $G_2$ if $2 \in R^{\times}$. Thus, in these cases, the profinite and the congruence topologies of $E(R)$ induce the same topology on $U_{\alpha} (R)$, for all $\alpha \in \Phi.$
\end{prop}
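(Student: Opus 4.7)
The plan is to define, for each root $\alpha \in \Phi$, the additive level set $\mathfrak{b}_\alpha := \{ t \in R \mid e_\alpha(t) \in N \}$. Since $e_\alpha \colon (R,+) \to E(R)$ is an additive homomorphism and $N$ has finite index in $E(R)$, each $\mathfrak{b}_\alpha$ is a subgroup of $(R,+)$ of finite index; the goal is to produce an ideal $\ba \subset R$ of finite index contained in $\bigcap_{\alpha \in \Phi} \mathfrak{b}_\alpha$, which is visibly equivalent to (\ref{E-Ideal}). A useful preliminary reduction is that the Weyl group lifts into $E(R)$ via the standard elements $w_\alpha = e_\alpha(1) e_{-\alpha}(-1) e_\alpha(1)$: conjugation by $w_\alpha$ sends $e_\beta(t)$ to $e_{s_\alpha \beta}(\pm t)$, and since $N$ is normal this forces $\mathfrak{b}_\alpha = \mathfrak{b}_{w \alpha}$ for every Weyl element $w$. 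Consequently, for a doubly-laced system there are at most two distinct subgroups $\mathfrak{b}^\ell$ (for long roots) and $\mathfrak{b}^s$ (for short roots), while for a simply-laced system there is only one $\mathfrak{b}$.

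The workhorse is the Chevalley commutator formula. In the best case, two roots $\alpha, \beta$ span an $A_2$-subsystem and the formula reads $[e_\beta(r), e_\alpha(t)] = e_{\alpha+\beta}(\pm rt)$ with no higher-order terms. If $t \in \mathfrak{b}_\alpha$, this immediately yields $rt \in \mathfrak{b}_{\alpha+\beta}$ for every $r \in R$; applying the analogous relation with $-\beta$ in place of $\beta$ then takes us back to the $\alpha$ root group and gives $rt \in \mathfrak{b}_\alpha$. Thus $\mathfrak{b}_\alpha$ is itself an $R$-ideal. Since every root of a simply-laced system of rank $\geq 2$ lies in an $A_2$-subsystem, this immediately gives the result for $\Phi$ of type $A_\ell, D_\ell, E_6, E_7, E_8$, with $\ba$ taken to be the common value of the $\mathfrak{b}_\alpha$.

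For $B_n$ $(n \geq 3)$ and $F_4$, the set of long roots forms a simply-laced subsystem of rank $\geq 2$ (of type $D_n$ and $D_4$, respectively), so the previous paragraph already yields that $\mathfrak{b}^\ell$ is an ideal of finite index. To control $\mathfrak{b}^s$, I would select a long root $\alpha$ and a short root $\beta$ such that $\alpha + \beta$ is short and $\alpha + 2\beta$ is long (e.g.\ $\alpha = e_{n-1} - e_n$, $\beta = e_n$ in $B_n$); the Chevalley commutator formula then reads $[e_\alpha(s), e_\beta(t)] = e_{\alpha+\beta}(\pm st) \cdot e_{\alpha+2\beta}(\pm s t^2)$, with the two factors commuting. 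Taking $s \in \mathfrak{b}^\ell$, the fact that $\mathfrak{b}^\ell$ is already an ideal gives $s t^2 \in \mathfrak{b}^\ell$, so $e_{\alpha+2\beta}(\pm s t^2) \in N$ on its own; since the full commutator lies in $N$ too, one concludes $e_{\alpha+\beta}(\pm st) \in N$, i.e.\ $st \in \mathfrak{b}^s$ for every $t \in R$. Taking $t = 1$ yields $\mathfrak{b}^\ell \subset \mathfrak{b}^s$, and $\ba := \mathfrak{b}^\ell$ does the job. For $\Phi$ of type $C_n$ $(n \geq 3)$ the picture is dual: the short roots form the simply-laced subsystem $D_n$, so $\mathfrak{b}^s$ is an ideal, and the short-to-long commutator (e.g.\ $[e_{e_i - e_j}(s), e_{e_i + e_j}(t)] = e_{2e_i}(\pm 2 st)$) picks up an extra structure constant $\pm 2$; the hypothesis $2 \in R^\times$ is exactly what lets us invert this factor and conclude $\mathfrak{b}^s \subset \mathfrak{b}^\ell$. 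The remaining cases $C_2$ and $G_2$, where neither root length supplies a clean simply-laced subsystem of rank $\geq 2$, require more delicate commutator manipulations that again exploit $2 \in R^\times$ to handle the extra structure constants.

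Once $\ba$ is in hand, the topological assertion is formal: on $U_\alpha(R) \cong (R,+)$ every profinite neighborhood of the identity contains some $\mathfrak{b}_\alpha \supset \ba$ and hence the congruence-open subgroup $e_\alpha(\ba) = U_\alpha(\ba)$, while conversely every congruence subgroup of $E(R)$ is of finite index and therefore profinite-open. The main technical obstacle I anticipate is the bookkeeping in the non-simply-laced cases: one must choose the commutator formulas so that every ``extra'' Chevalley term lands in a subgroup already known to be an ideal. The most delicate case is $G_2$, where the commutator of two short roots (even those spanning their own $A_2$-subsystem) acquires additional long-root terms involving cubic expressions and the structure constant $3$, and one must verify that all of these can be absorbed using only the invertibility of $2$.
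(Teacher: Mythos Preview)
Your approach is essentially the paper's: define the additive level sets $\mathfrak{b}_\alpha$, reduce via Weyl transitivity to one subgroup per root length, use $A_2$-type commutators to show the relevant $\mathfrak{b}$ is an ideal, and then propagate between lengths via mixed commutators. The variations are minor --- you use the $D_n$ (resp.\ $D_4$) subsystem of long roots in $B_n$ (resp.\ $F_4$) where the paper uses an $A_{n-1}$ (resp.\ a $B_3$) subsystem, and you handle $C_n$ for $n\geq 3$ directly via the short-root $D_n$ subsystem whereas the paper treats it implicitly through the $C_2$ subsystem; all of these are interchangeable.

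Two remarks on the cases you left as sketches. For $G_2$, do not attempt to run the $A_2$ argument on the \emph{short} roots: as you noticed, the commutator of two short roots spanning an $A_2$ picks up extra long-root terms, and you cannot absorb those until you already control the long-root level. The paper (and the natural route) works in the opposite direction: the \emph{long} roots of $G_2$ form a genuine $A_2$ with clean commutators, so $\mathfrak{b}^\ell$ is an ideal without any hypothesis; one then extracts the short-root inclusion from the commutator $[e_k(s), e_c(t)]$ by taking a suitable product at $t=\pm 1$ (this is where $2\in R^\times$ enters, via $s/2$). The structure constant $3$ you mention appears only inside a long-root factor and is absorbed because $\mathfrak{b}^\ell$ is already an ideal --- you never need $3\in R^\times$. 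For $C_2$, neither length gives a rank-$\geq 2$ simply-laced subsystem, so one must bootstrap: the paper starts from a short root, uses a commutator with $s/4$ to reach the long root, and then a symmetrized pair of commutators to return to the short root and verify the ideal property; your sketch is consistent with this but the details are genuinely needed.
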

\begin{proof}
We begin with two preliminary remarks. First, for any root $\alpha \in \Phi$,
$$
\ba (\alpha) := \{ t \in R \mid e_{\alpha} (t) \in N \}
$$
is obviously a finite index subgroup of the additive group of $R$. What one needs to show is that either $\ba(\alpha)$ itself is an ideal of $R$, or that it at least contains an ideal of finite index. Second, if $\alpha_1, \alpha_2 \in \Phi$ are roots of the same length, then by (\cite{H1}, 10.4, Lemma C), there exists an element $\tilde{w}$ of the Weyl group $W(\Phi)$ such that $\alpha_2 = \tilde{w} \cdot \alpha_1$. Consequently, it follows from (\cite{St1}, 3.8, relation (R4)) that we can find $w \in E(R)$ such that
$$
w e_{\alpha_1}(t) w^{-1} = e_{\alpha_2} (\varepsilon(w) t)
$$
for all $t \in R$, where $\varepsilon(w) \in \{ \pm 1 \}$ is independent of $t.$ Since $N$ is a normal subgroup of $E(R)$, we conclude that
\begin{equation}\label{E-Ideal1}
\ba (\alpha_1) = \ba (\alpha_2).
\end{equation}
Thus, it is enough to find a finite index ideal $\ba \subset R$ such that (\ref{E-Ideal}) holds for a {\it single} root of each length.

Let us now prove our claim for $\Phi$ of type $A_2$ using explicit computations with commutator relations. We will use the standard realization of $\Phi$, described in \cite{Bour}, where the roots are of the form $\varepsilon_i - \varepsilon_j$,
with $i,j \in \{1, 2, 3 \}, i \neq j.$ To simplify notation, we will write $e_{ij} (t)$ to denote $e_{\alpha} (t)$ for $\alpha = \varepsilon_i - \varepsilon_j.$ Set $\alpha_1 = \varepsilon_1 - \varepsilon_2.$ We will now show that $\ba (\alpha_1)$ is an ideal of $R$, and then it will follow from our previous remarks that $\ba := \ba(\alpha_1)$ is as required. Let $r \in \ba (\alpha_1)$ and $s \in R.$ Since $N \lhd E(R)$, the (well-known) relation
$$
[e_{12} (r), e_{23} (s)] = e_{13} (rs),
$$
where $[g,h] = gh g^{-1} h^{-1}$, shows that $rs \in \ba (\alpha_2)$ for $\alpha_2 = \varepsilon_1 - \varepsilon_2.$ But then (\ref{E-Ideal1}) yields $rs \in \ba(\alpha_1),$ completing the argument.

Now let $\Phi$ be any root system of rank $\geq 2$ in which all roots have the same length. Then clearly $\Phi$ contains a subsystem $\Phi_0$ of type $A_2$, so our previous considerations show that there exists a~finite index ideal $\ba \subset R$ with the property that $\ba \subset \ba(\alpha)$ for all $\alpha \in \Phi_0.$ But then, by (\ref{E-Ideal1}), the same inclusion holds for all $\alpha \in \Phi.$

Next, we consider the case of $\Phi$ of type $B_n$ with $n \geq 3$. Note that since the system of type $F_4$ contains a subsystem of type $B_3$, this will automatically take care of the case when $\Phi$ is of type $F_4$ as well. 
We will use the standard realization of $\Phi$ of type $B_n$, where the roots are of the form $\pm \varepsilon_i$, $\pm \varepsilon_i \pm \varepsilon_j$ with $i, j \in \{ 1, \dots, n \}, i \neq j.$ The system $\Phi$ contains a subsystem $\Phi_0$ of type $A_{n-1}$, all of whose roots are long roots in $\Phi.$ Arguing as above, we see that there exists an ideal $\ba \subset R$ of finite index such that (\ref{E-Ideal}) holds for all $\alpha \in \Phi_0,$ and hence for all long roots $\alpha \in \Phi.$ To show that the same ideal also works for short roots, we will use the following relation, which is verified by direct computation:
\begin{equation}\label{E-3}
[e_{\varepsilon_1 + \varepsilon_2} (r), e_{-\varepsilon_2}(s)] = e_{\varepsilon_1} (rs) e_{-\varepsilon_1 - \varepsilon_2} (-rs^2).
\end{equation}
for any $r, s \in R$. Now, if $r \in \ba,$ then $e_{\varepsilon_1 + \varepsilon_2} (r), e_{- \varepsilon_1 - \varepsilon_2} (-r) \in N.$ So, setting $s = 1$ in (\ref{E-3}) and noting that $[e_{\varepsilon_1 + \varepsilon_2} (r), e_{- \varepsilon_2} (1)] \in N$ as $N \lhd E(R),$ we obtain that $e_{\varepsilon_1} (r) \in N.$ Thus, (\ref{E-Ideal}) holds for $\alpha = \varepsilon_1$, and therefore for all short roots.

Next, we proceed to the case of $\Phi$ of type $B_2 = C_2$, where we assume that $2 \in R^{\times}.$ 
We will use the same realization of $\Phi$ as in the previous paragraph (for $n = 2$). Set $\ba = \ba (\varepsilon_1).$ Then for $r \in \ba$, $s \in R$, one can check by direct computation that
\begin{equation}\label{E-2}
[e_{\varepsilon_1} (r), e_{\varepsilon_2} (s/4)] = e_{\varepsilon_1 + \varepsilon_2} (rs/2) \in N.
\end{equation}
Next, using (\ref{E-3}), in conjunction with the fact that $e_{\varepsilon_1} (u)$ and $e_{\varepsilon_1 - \varepsilon_2} (v)$ commute for all $u, v \in R$, we obtain
$$
[e_{\varepsilon_1 + \varepsilon_2} (rs/2), e_{- \varepsilon_2} (1)][e_{\varepsilon_1 + \varepsilon_2} (rs/2), e_{- \varepsilon_2} (-1)]^{-1} = e_{\varepsilon_1} (rs) \in N,
$$
i.e. $rs \in \ba$, which shows that $\ba$ is an ideal. Furthermore, from (\ref{E-2}), we see that for any $r \in \ba,$ we have
$$
[e_{\varepsilon_1} (r), e_{\varepsilon_2} (1/2)] = e_{\varepsilon_1 + \varepsilon_2} (r) \in N.
$$
Thus, $e_{\varepsilon_1 + \varepsilon_2} (\ba) \subset N,$ and therefore (\ref{E-Ideal}) holds for all $\alpha \in \Phi.$

Finally, suppose that $\Phi$ is of type $G_2$ and assume again that $2 \in R^{\times}.$
We will use the realization of $\Phi$ described in \cite{CK}: one picks a system of simple roots $\{k, c \}$ in $\Phi$, where $k$ is long and $c$ is short, and then the long roots of $\Phi$ are
$$\pm k, \pm (3c + k), \pm (3c + 2k),$$ and the short roots are $$\pm c, \pm (c+k), \pm (2c + k).$$ Set $\ba = \ba (k).$
Since the long roots of $\Phi$ form a subsystem of type $A_2$, for which our claim has already been established, we conclude that 
$\ba$ is a finite index ideal in $R$ and that (\ref{E-Ideal}) holds for all long roots. To show that (\ref{E-Ideal}) is true for the short roots as well, we need to recall the following explicit forms of the Steinberg commutator relations that were established in (\cite{CK}, Theorem 1.1):
\begin{equation}\label{E-4}
[e_{k}(s), e_{c} (t)] = e_{c+k} (\varepsilon_1 st) e_{2c + k} (\varepsilon_2 st^2) e_{3c + k} (\varepsilon_3 st^3) e_{3c + 2k} (\varepsilon_4 s^2 t^3),
\end{equation}
\begin{equation}\label{E-5}
[e_{c+k} (s), e_{2c+k}(t)] = e_{3c+2k} (3 \varepsilon_5 st),
\end{equation}
where $\varepsilon_i = \pm 1.$
Using (\ref{E-4}), we obtain
$$
[e_k(s), e_c(1)] [e_k (s), e_c(-1)] =
$$
$$
=e_{c+k} (\varepsilon_1 s) e_{2c+k} (\varepsilon_2 s) e_{3c + k} (\varepsilon_3 s) e_{3c+2k} (\varepsilon_4 s^2) e_{c+k} (-\varepsilon_1 s) e_{2c+k} (\varepsilon_2 s) e_{3c + k} (-\varepsilon_3 s) e_{3c+2k} (-\varepsilon_4 s^2).
$$
Since the terms $e_{3c+k} (-\varepsilon_3 s)$ and $e_{3c + 2k} (-\varepsilon_4 s^2)$ commute with all other terms, the last expression reduces to
$$
e_{c+k} (\varepsilon_1 s) e_{2c +k} (\varepsilon_2 s) e_{c+k}(-\varepsilon_1 s) e_{2c + k} (\varepsilon_2 s),
$$
which, using (\ref{E-5}), can be written in the form
$$
e_{3c + 2k} (3 \varepsilon_5 \varepsilon_1 \varepsilon_2 s^2) e_{2c + k} (2 \varepsilon_2 s).
$$
Hence if $s \in \ba$, we obtain that
$$
[e_k(s/2), e_c(1)] [e_k (s/2), e_c(-1)] = e_{3c + 2k} (3 \varepsilon_5 \varepsilon_1 \varepsilon_2 s^2/4) e_{2c + k} (\varepsilon_2 s) \in N.
$$
But $e_{3c + 2k} (3 \varepsilon_5 \varepsilon_1 \varepsilon_2 s^2/4) \in N,$ from which it follows that $e_{2c + k} (\mathfrak{a}) \subset N.$ This completes the proof.
\end{proof}

\vskip1mm

\noindent {\bf Remark.} If $R$ is the ring of algebraic $S$-integers, then any subgroup of finite index of the additive group of $R$ contains an ideal of finite index, so the conclusion of Proposition \ref{P-3} holds for root systems of rank $>1$ of all types without any additional restrictions on $R$. On the other hand, if $R$ is the ring of $S$-integers in a global field of positive characteristic $>2$, then $2 \in R^{\times}$, and Proposition \ref{P-3} again applies to all root systems without any extra assumptions.

\section{Proof of the main theorem}

We return to the notations introduced in \S \ref{S:I}. In particular, we set $\Gamma = E(R)$, where $R$ is a commutative noetherian ring such that $2 \in R^{\times}$ if our root system $\Phi$ is of type $C_n$ ($n \geq 2$) or $G_2$, and let $\widehat{\Gamma}$ and $\overline{\Gamma}$ denote the profinite and congruence completions of $\Gamma$, respectively. Furthermore, we let $\pi \colon \widehat{\Gamma} \to \overline{\Gamma}$ denote the canonical continuous homomorphism, so that $C(\Gamma) := \ker \pi$ is the congruence kernel. For each root $\alpha \in \Phi$, we let $\widehat{U}_{\alpha}$ and $\overline{U}_{\alpha}$ denote the closures of the images of the natural homomorphisms $U_{\alpha} (R) \to \widehat{\Gamma}$ and $U_{\alpha} (R) \to \overline{\Gamma}.$ By Proposition \ref{P-3}, the profinite and congruence topologies of $\Gamma$ induce the same topology on each $U_{\alpha} (R)$, which implies that $\pi \vert_{\widehat{U}_{\alpha}} \colon \widehat{U}_{\alpha} \to \overline{U}_{\alpha}$ is a group isomorphism. From the definitions, it is clear that $\overline{U}_{\alpha}$ coincides with $\overline{e}_{\alpha} (\widehat{R})$, where $\overline{e}_{\alpha} \colon \widehat{R} \to G(\widehat{R}) = \overline{G(R)}$ is the 1-parameter subgroup associated with $\alpha$ over the ring $\widehat{R}.$ Set
$$
\widehat{e}_{\alpha} = (\pi \vert_{\widehat{U}_{\alpha}})^{-1} \circ \overline{e}_{\alpha}.
$$
Then $\widehat{e}_{\alpha} \colon \widehat{R} \to \widehat{U}_{\alpha}$ is an isomorphism of topological groups, and in particular, we have
$$
\widehat{e}_{\alpha} (r+s) = \widehat{e}_{\alpha} (r) \widehat{e}_{\alpha} (s)
$$
for all $r, s \in \widehat{R}$ and any $\alpha \in \Phi.$

Before establishing some further properties of the $\widehat{e}_{\alpha}$, let us recall that for any commutative ring $S$ and any $\alpha, \beta \in \Phi$, $\beta \neq -\alpha$, there is a relation in $G(S)$ of the form
\begin{equation}\label{E-Steinberg}
[e_{\alpha} (s), e_{\beta} (t)] = \prod e_{i \alpha + j \beta} (N_{\alpha, \beta}^{i,j} s^i t^j)
\end{equation}
for all $s,t \in S$,
where the product is taken over all roots of the form $i \alpha + j \beta$ with $i, j \in \Z^+$, listed in an arbitrary (but {\it fixed}) order, and the $N^{i,j}_{\alpha, \beta}$ are integers depending only on  $\alpha, \beta \in \Phi$ and the order of the factors in (\ref{E-Steinberg}), but not on $s, t \in S$. Furthermore, recall that the abstract group $\tilde{G}(S)$ with generators $x_{\alpha} (s)$ for all $s \in S$ and $\alpha \in \Phi$ subject to the relations
\vskip1mm

(R1) $\tilde{x}_{\alpha}(s) \tilde{x}_{\alpha}(t) = \tilde{x}_{\alpha} (s+t)$,

\vskip1mm

(R2) \parbox[t]{15cm}{$[\tilde{x}_{\alpha} (s), \tilde{x}_{\beta} (t)] = \prod \tilde{x}_{i \alpha + j \beta} (N^{i,j}_{\alpha, \beta} s^i t^j)$, where $N_{\alpha, \beta}^{i,j}$ are the same integers, and the roots are listed in the same order, as in (\ref{E-Steinberg}),}

\vskip1mm
\noindent is called the {\it Steinberg group}. It follows from (\ref{E-Steinberg}) that there exists a canonical homomorphism $\tilde{G}(S) \to G(S)$, defined by $x_{\alpha} (s) \mapsto e_{\alpha} (s)$, whose kernel is denoted by $K_2 (\Phi, S).$
\begin{lemma}\label{L-3}
{\rm (1)} \parbox[t]{15cm}{For any $\alpha, \beta \in \Phi$, $\beta \neq -\alpha$, and $s, t \in \widehat{R}$, we have $[\widehat{e}_{\alpha} (s), \widehat{e}_{\beta} (t)] = \prod \widehat{e}_{i \alpha + j \beta} (N_{\alpha, \beta}^{i,j} s^i t^j).$}

\vskip2mm

\noindent Let $\widehat{R} = \prod_{\m \in \M} R_{\m}$ be the decomposition from Lemma \ref{L-1}(1), and for $\m \in \M$, let $\widehat{\Gamma}_{\m}$ (resp. $\widehat{\Gamma}_{\m}'$) be the subgroup of $\widehat{\Gamma}$ (algebraically) generated by $\widehat{e}_{\alpha} (r)$ for all $r \in R_{\m}$ (resp., $r \in R_{\m}' := \prod_{\n \neq \m} R_{\n}$) and all $\alpha \in \Phi$. Then

\noindent {\rm (2)} \parbox[t]{16cm}{There exists a surjective group homomorphism $\theta_{\m} \colon \tilde{G}(R_{\m}) \to \widehat{\Gamma}_{\m}$ such that $x_{\alpha} (r) \mapsto \widehat{e}_{\alpha} (r)$ for all $r \in R_{\m}$ and $\alpha \in \Phi.$}

\vskip1mm

\noindent {\rm (3)} \parbox[t]{16cm}{$\widehat{\Gamma}_{\m}$ and $\widehat{\Gamma}_{\m}'$ commute elementwise inside $\widehat{\Gamma}.$}
\end{lemma}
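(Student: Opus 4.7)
The plan is to handle (1), (2), and (3) in sequence, with (3) supplying the bulk of the work. For (1), I would start from the universal Steinberg commutator formula (\ref{E-Steinberg}), which holds in $E(R)$ with the same integer coefficients $N_{\alpha,\beta}^{i,j}$ that govern $G(S)$ for every commutative ring $S$. Using the definition $\widehat{e}_\alpha = (\pi\vert_{\widehat{U}_\alpha})^{-1} \circ \overline{e}_\alpha$, one checks that the canonical image of $e_\alpha(r) \in \Gamma$ in $\widehat{\Gamma}$ coincides with $\widehat{e}_\alpha(\hat{r})$ for $r \in R$---both elements lie in $\widehat{U}_\alpha$ and have the same image $\overline{e}_\alpha(\hat{r})$ under the isomorphism $\pi\vert_{\widehat{U}_\alpha}$. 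Pushing the Steinberg relation forward from $\Gamma$ to $\widehat{\Gamma}$ therefore yields the desired identity whenever $s, t$ lie in the image of $R$ in $\widehat{R}$. To extend to arbitrary $s, t \in \widehat{R}$, I would invoke continuity: $\widehat{e}_\alpha \colon \widehat{R} \to \widehat{U}_\alpha$ is continuous (it is the composition of the continuous $\overline{e}_\alpha$ with $(\pi\vert_{\widehat{U}_\alpha})^{-1}$, which is continuous because $\pi\vert_{\widehat{U}_\alpha}$ is a continuous bijection between compact Hausdorff groups), so both sides of the identity depend continuously on $(s,t) \in \widehat{R}^2$ and agreement on the dense image of $R \times R$ forces agreement everywhere.

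Statement (2) is immediate: the additivity of $\widehat{e}_\alpha$ established just before the lemma gives (R1), and part (1) specialised to $s, t \in R_\m \subset \widehat{R}$ gives (R2), so the universal property of $\tilde{G}(R_\m)$ produces $\theta_\m$; surjectivity onto $\widehat{\Gamma}_\m$ is built into the definition of the latter. For (3) it suffices to verify that generators commute, i.e.\ $[\widehat{e}_\alpha(r), \widehat{e}_\beta(s)] = 1$ for $r \in R_\m$, $s \in R_\m'$, and $\alpha, \beta \in \Phi$. When $\beta \neq -\alpha$, part (1) disposes of the commutator at once: in the product decomposition $\widehat{R} = R_\m \times R_\m'$ the elements $r$ and $s$ are supported on complementary factors, hence $r^i s^j = 0$ in $\widehat{R}$ for every $i, j \geq 1$, each factor $\widehat{e}_{i\alpha + j\beta}(0)$ is trivial, and the commutator vanishes.

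The case $\beta = -\alpha$ is the main obstacle, since (1) gives no direct relation between $\widehat{e}_\alpha$ and $\widehat{e}_{-\alpha}$. My plan here is to reduce it to the previous case by rewriting $\widehat{e}_\alpha(r)$ as a product of elements $\widehat{e}_\mu(t)$ with $\mu \in \Phi \setminus \{\pm\alpha\}$ and $t \in R_\m$; any such factor commutes with $\widehat{e}_{-\alpha}(s)$ by the case already handled, so the product does too. Since $\Phi$ has rank $\geq 2$, I would pick roots $\delta, \gamma \in \Phi$ with $\delta + \gamma = \alpha$, $\delta, \gamma \neq \pm\alpha$, and with the structure constant $N_{\delta,\gamma}^{1,1}$ invertible in $R_\m$. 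A routine case-by-case inspection of root systems shows such a decomposition is always available under the hypotheses of the main theorem: in simply-laced systems all $N^{1,1} = \pm 1$; in $B_n$ and $F_4$ one can choose long-long or suitable long-short pairs with coefficient $\pm 1$; and in $C_n$ and $G_2$ the hypothesis $2 \in R^\times$ handles the coefficients $\pm 2$, while the coefficient $\pm 3$ that can appear in $G_2$ is avoidable by working inside the long-root subsystem of type $A_2$. Applying (1) to $[\widehat{e}_\delta(u), \widehat{e}_\gamma(v)]$ with $v = 1_{R_\m}$ and $u = r / N_{\delta,\gamma}^{1,1}$, one can solve for $\widehat{e}_\alpha(r)$ as a product of this commutator together with the remaining factors $\widehat{e}_{i\delta + j\gamma}(N_{\delta,\gamma}^{i,j} u^i v^j)$ for $(i,j) \neq (1,1)$. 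A short check rules out $i\delta + j\gamma = \pm\alpha = \pm(\delta + \gamma)$ when $(i,j) \neq (1,1)$ and $i, j \geq 1$---otherwise $\delta$ and $\gamma$ would have to be proportional, contradicting $\delta + \gamma \neq 0$ in a reduced root system---so every factor is of the form $\widehat{e}_\mu(\cdot)$ with $\mu \neq \pm\alpha$ and argument in $R_\m$, completing the reduction.
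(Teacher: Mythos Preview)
Your treatment of parts (1), (2), and the case $\beta \neq -\alpha$ in (3) matches the paper's argument essentially verbatim: extend the Steinberg relation from $R\times R$ to $\widehat{R}\times\widehat{R}$ by continuity, invoke the universal property of $\tilde{G}(R_{\m})$, and use $r^is^j=0$ when $r\in R_{\m}$, $s\in R_{\m}'$.

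For the remaining case $\beta=-\alpha$, however, you take a genuinely different route. The paper observes that since $\mathrm{rank}\,\Phi\geq 2$, one can choose a system of simple roots $\Pi\subset\Phi$ with $\alpha\notin\Pi$; because $\tilde{G}(R_{\m})$ is already generated by $\{x_{\gamma}(r):\gamma\in\Pi,\ r\in R_{\m}\}$, the surjection $\theta_{\m}$ from part (2) shows that $\widehat{\Gamma}_{\m}$ is generated by $\widehat{e}_{\gamma}(r)$ with $\gamma\neq\alpha$. Each such generator commutes with $\widehat{e}_{-\alpha}(s)$ by the case already handled (note $\gamma\neq\alpha$ is exactly the condition $-\alpha\neq-\gamma$ needed to apply (1)), so $\widehat{e}_{\alpha}(r)\in\widehat{\Gamma}_{\m}$ does too. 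This is uniform in $\Phi$ and requires no inspection of structure constants.

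Your approach instead rewrites $\widehat{e}_{\alpha}(r)$ explicitly via a commutator $[\widehat{e}_{\delta}(u),\widehat{e}_{\gamma}(1_{R_{\m}})]$ with $\delta+\gamma=\alpha$ and $N_{\delta,\gamma}^{1,1}$ a unit. This works, but it forces a type-by-type verification that such a pair $(\delta,\gamma)$ exists, and your sketch of that verification is incomplete: the remark that in $G_2$ one avoids the coefficient $\pm 3$ ``by working inside the long-root subsystem of type $A_2$'' only covers \emph{long} roots. For a short root such as $c$ you must exhibit a decomposition separately, e.g.\ $c=(c+k)+(-k)$, and check that the relevant coefficient is $\pm 1$ (it is, since $-k-(c+k)=-c-2k\notin\Phi$). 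Once this and the analogous checks are filled in, your argument goes through; the paper's argument simply sidesteps the bookkeeping by exploiting $\theta_{\m}$ and the flexibility in choosing simple roots.
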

\begin{proof}
(1) Define two continuous maps
$$
\varphi \colon \widehat{R} \times \widehat{R} \to \widehat{\Gamma}, \ \ \ (s,t) \mapsto [\widehat{e}_{\alpha} (s), \widehat{e}_{\beta} (t)]
$$
and
$$
\psi \colon \widehat{R} \times \widehat{R} \to \widehat{\Gamma}, \ \ \ (s,t) \mapsto \prod \widehat{e}_{i \alpha + j \beta} (N_{\alpha, \beta}^{i,j} s^i t^j).
$$
It follows from (\ref{E-Steinberg}) that these maps coincide on $R \times R.$ Since $R \times R$ is dense in $\widehat{R} \times \widehat{R},$ we have $\varphi \equiv \psi,$ yielding our claim.

(2) Since we have shown that the $\widehat{e}_{\alpha}(r)$, $r \in R_{\m}$, $\alpha \in \Phi,$ satisfy the relations (R1) and (R2), the existence of the homomorphism $\theta_{\m}$ follows.

(3) It suffices to show that for any $\alpha, \beta \in \Phi$ and any $r \in R_{\m}, \ s \in R_{\m}',$ the elements $\widehat{e}_{\alpha} (r), \widehat{e}_{\beta} (s) \in \widehat{\Gamma}$ commute. Since $r s= 0$ in $\widehat{R},$ this fact immediately follows from (1) if $\beta \neq -\alpha.$ To handle the remaining case $\beta = - \alpha,$ we observe that for any ring $S$ and the corresponding Steinberg group $\tilde{G}(S)$, we have
\begin{equation}\label{E:St-1}
\tilde{G}(S) = <x_{\gamma} (r) \mid \gamma \in \Phi \setminus \{ \alpha \}, r \in S>.
\end{equation}
Indeed, it is well-known that $\tilde{G}(S)$ is generated by the elements $x_{\gamma} (r)$ for all $r \in R$ and all $\gamma$ in an arbitrarily chosen system $\Pi \subset \Phi$ of simple roots (this follows, for example, from the fact that the Weyl group of $\Phi$ is generated by the reflections corresponding to simple roots, and moreover, every root lies in the orbit of a simple root under the action of the Weyl group).
On the other hand, since $\Phi$ is of rank $\geq 2$, for any $\alpha \in \Phi,$ one can find a system of simple roots $\Pi \subset \Phi$ that does not contain $\alpha$, and (\ref{E:St-1}) follows. Using the homomorphism $\theta_{\m}$ constructed in part (2), we conclude from (\ref{E:St-1}) that $\widehat{\Gamma}_{\m} = \theta_{\m} (\tilde{G}(R_{\m}))$ is generated by $\widehat{e}_{\gamma}(r)$ for $r \in R_{\m}$, $\gamma \in \Phi \setminus \{ \alpha \}$. So, since we already know that $\widehat{e}_{-\alpha} (s)$, with $s \in R_{\m}'$, commutes with all of these elements, it also commutes with $\widehat{e}_{\alpha} (r),$ yielding our claim.

\end{proof}

The following lemma, which uses results of Stein \cite{St2} on the computation of $K_2$ over semi-local rings, is a key ingredient in the proof of the Main Theorem.
\begin{lemma}\label{L-4}
The kernel $\ker (\pi \vert_{\widehat{\Gamma}_{\m}})$ of the restriction $\pi \vert_{\widehat{\Gamma}_{\m}}$ lies in the center of $\widehat{\Gamma}_{\m}$, for any $\m \in \M.$ 
\end{lemma}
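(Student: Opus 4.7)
The plan is to factor $\pi|_{\widehat{\Gamma}_{\m}}$ through the canonical Steinberg surjection onto $E(R_{\m})$, and then invoke Stein's centrality theorem for $K_2$ over semilocal rings. Since $G$ is an affine $\Z$-scheme of finite type, the functor of points commutes with arbitrary products of rings, so Lemma \ref{L-1}(1) yields the identification $G(\widehat{R}) = \prod_{\n \in \M} G(R_{\n})$. Under the identification $\overline{\Gamma} = G(\widehat{R})$ of Proposition \ref{P-2}, for $r \in R_{\m}$ (viewed in $\widehat{R}$ as supported only at $\m$) the image $\pi(\widehat{e}_{\alpha}(r)) = \overline{e}_{\alpha}(r)$ is trivial in every factor $G(R_{\n})$ with $\n \neq \m$ and equals $e_{\alpha}(r)$ in the $\m$-th factor. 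Consequently $\pi(\widehat{\Gamma}_{\m})$ is naturally identified with $E(R_{\m}) \subset G(R_{\m})$.

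By Lemma \ref{L-3}(2) together with the above identification, the composition
$$
\tilde{G}(R_{\m}) \xrightarrow{\theta_{\m}} \widehat{\Gamma}_{\m} \xrightarrow{\pi|_{\widehat{\Gamma}_{\m}}} E(R_{\m})
$$
sends $x_{\alpha}(r) \mapsto e_{\alpha}(r)$ and is therefore the canonical surjection defining $K_2(\Phi, R_{\m})$. Since $\theta_{\m}$ is itself surjective, a straightforward diagram chase then gives the equality
$$
\ker(\pi|_{\widehat{\Gamma}_{\m}}) = \theta_{\m}(K_2(\Phi, R_{\m})).
$$
The remaining step is to invoke Stein's theorem from \cite{St2}: for any semilocal ring $\cR$ (with $2 \in \cR^{\times}$ when $\Phi$ is of type $C_n$ or $G_2$), the group $K_2(\Phi, \cR)$ is central in $\tilde{G}(\cR)$. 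By Lemma \ref{L-1}(2), $R_{\m}$ is complete local, hence semilocal, and the hypothesis $2 \in R^{\times}$ of the Main Theorem descends to $2 \in R_{\m}^{\times}$; thus Stein's theorem applies. Since centrality is preserved under the surjective homomorphism $\theta_{\m}$, the subgroup $\theta_{\m}(K_2(\Phi, R_{\m})) = \ker(\pi|_{\widehat{\Gamma}_{\m}})$ lies in the center of $\widehat{\Gamma}_{\m}$, as required.

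The main technical point to verify with care is the identification $\ker(\pi|_{\widehat{\Gamma}_{\m}}) = \theta_{\m}(K_2(\Phi, R_{\m}))$: the inclusion $\supset$ is immediate, but the reverse inclusion requires that preimages under $\theta_{\m}$ of elements of $\ker(\pi|_{\widehat{\Gamma}_{\m}})$ genuinely lie in $K_2(\Phi, R_{\m})$, which in turn depends on the decomposition $\widehat{R} = \prod_{\n} R_{\n}$ together with the functorial identification $G(\prod R_{\n}) = \prod G(R_{\n})$ coming from the affineness of $G$. Once this matching is in place, the lemma reduces essentially to a citation of \cite{St2}.
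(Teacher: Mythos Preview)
Your proof is correct and follows essentially the same route as the paper: factor the canonical Steinberg surjection $\mu\colon \tilde{G}(R_{\m})\to E(R_{\m})$ as $(\pi\vert_{\widehat{\Gamma}_{\m}})\circ\theta_{\m}$, use surjectivity of $\theta_{\m}$ to get $\ker(\pi\vert_{\widehat{\Gamma}_{\m}})=\theta_{\m}(K_2(\Phi,R_{\m}))$, and conclude centrality from Stein's theorem. One small point: the hypothesis in Stein's result (\cite{St2}, Theorem~2.13) is that the semilocal ring be \emph{generated by its units}, not that $2$ be invertible; since $R_{\m}$ is local this is automatic, so your added condition on $2$ is unnecessary here.
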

\begin{proof}
Stein has shown that if $\Phi$ has rank $\geq 2$ and $S$ is a semi-local ring which is generated by its units, then $K_2 (\Phi, S)$ lies in the center of $\tilde{G}(S)$ (cf. \cite{St2}, Theorem 2.13). Since $S = R_{\m}$ is local, it is automatically generated by its units, hence $K_2 (\Phi, R_{\m}) = \ker (\tilde{G}(R_{\m}) \stackrel{\mu}{\longrightarrow} E(R_{\m}))$ is central. On the other hand, $\mu$ admits the following factorization:
$$
\tilde{G}(R_{\m}) \stackrel{\theta_{\m}}{\longrightarrow} \widehat{\Gamma}_{\m} \stackrel{\pi \vert_{\widehat{\Gamma}_{\m}}}{\longrightarrow} E(R_{\m}).
$$
Since $\theta_{\m}$ is surjective, we conclude that
$$
\ker (\pi \vert_{\widehat{\Gamma}_{\m}}) = \theta_{\m} (K_2 (\Phi, R_{\m}))
$$
is central in $\widehat{\Gamma}_{\m}.$
\end{proof}

Now fix $\m \in \M$ and let $\Delta_{\m} = \widehat{\Gamma}_{\m} \widehat{\Gamma}_{\m}'$ be the subgroup of $\widehat{\Gamma}$ (algebraically) generated by $\widehat{\Gamma}_{\m}$ and $\widehat{\Gamma}_{\m}'.$ Let $c \in C(\Gamma) \cap \Delta_{\m},$ and write $c = c_1 c_2,$ with $c_1 \in \widehat{\Gamma}_{\m}, c_2 \in \widehat{\Gamma}_{\m}'.$
We have $\overline{\Gamma} = \overline{\Gamma}_{\m} \times \overline{\Gamma}_{\m}'$, where $\overline{\Gamma}_{\m} = E(R_{\m})$ and $\overline{\Gamma}_{\m}' = E(R_{\m}').$
Since $\pi (c_1) \in \overline{\Gamma}_{\m}$, $\pi(c_2) \in \overline{\Gamma}_{\m}',$ we conclude from $$\pi(c) = e = \pi(c_1) \pi(c_2)$$ that $\pi(c_1) = e,$ i.e. $c_1 \in \ker (\pi \vert_{\widehat{\Gamma}_{\m}}).$ Then by Lemma \ref{L-4}, $\widehat{\Gamma}_{\m}$ centralizes $c_1.$ On the other hand, $\widehat{\Gamma}_{\m}$ centralizes $c_2 \in \widehat{\Gamma}_{\m}'$ by Lemma \ref{L-3}(3). So, $\widehat{\Gamma}_{\m}$ centralizes $c.$ Thus, we have shown that $C \cap \Delta_{\m}$ is centralized by $\widehat{\Gamma}_{\m}.$ To prove that $\widehat{\Gamma}_{\m}$ actually centralizes all of $C$, we need the following
\begin{lemma}\label{L-5}
Let $\varphi \colon \mathcal{G}_1 \to \mathcal{G}_2$ be a continuous homomorphism of topological groups, and let $\mathcal{F} = \ker \varphi.$ Suppose $\Theta \subset \mathcal{G}_1$ is a dense subgroup such that there exists a compact set $\Omega \subset \Theta$ whose image $\varphi(\Omega)$ is a neighborhood of the identity in $\mathcal{G}_2.$ Then $\mathcal{F} \cap \Theta$ is dense in $\mathcal{F}.$
\end{lemma}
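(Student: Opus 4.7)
The plan is an approximation-then-correction argument. Fix $f \in \mathcal{F}$ and an open neighbourhood $W$ of $f$; after translating we may assume $W = fU_0$ for some open neighbourhood $U_0$ of $e$ in $\mathcal{G}_1$. The idea is to use density of $\Theta$ to pick $\theta \in \Theta$ close to $f$, observe that $\varphi(\theta) \in \mathcal{G}_2$ is then close to $e$ and so, after shrinking, lies inside $\varphi(\Omega)$, and lift it to some $\omega \in \Omega$ with $\varphi(\omega) = \varphi(\theta)$; the element $\theta\omega^{-1}$ is then automatically in $\mathcal{F} \cap \Theta$. The subtlety is arranging that $\theta\omega^{-1}$ actually lies in $W$, since a priori $\omega$ could be anywhere in the fibre $\Omega \cap \varphi^{-1}(\varphi(\theta))$.

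The key technical input, which exploits the compactness of $\Omega$, is the following upper-semicontinuity-type claim: for any open neighbourhood $U_2$ of $e$ in $\mathcal{G}_1$, there exists an open neighbourhood $V_0$ of $e$ in $\mathcal{G}_2$ with $V_0 \subset \varphi(\Omega)$ such that every $\omega \in \Omega$ with $\varphi(\omega) \in V_0$ admits a decomposition $\omega = \omega_0 u$ with $\omega_0 \in \Omega \cap \mathcal{F}$ and $u \in U_2$. To prove this, observe that the set $(\Omega \cap \mathcal{F}) \cdot U_2$ is open in $\mathcal{G}_1$ and contains $\Omega \cap \mathcal{F}$ (which is nonempty since $e \in \varphi(\Omega)$), so $K := \Omega \setminus (\Omega \cap \mathcal{F})U_2$ is compact and disjoint from $\mathcal{F}$. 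Hence $\varphi(K)$ is compact in the (Hausdorff) group $\mathcal{G}_2$, therefore closed, and does not contain $e$; one takes $V_0$ to be any open neighbourhood of $e$ contained in $\varphi(\Omega) \setminus \varphi(K)$.

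With this in hand, the proof concludes as follows. Choose open neighbourhoods $U_1, U_2$ of $e$ in $\mathcal{G}_1$ with $U_1 \cdot U_2^{-1} \subset U_0$ (possible by continuity of $(a, b) \mapsto a b^{-1}$ at $(e, e)$), and obtain the corresponding $V_0$ from the claim. By density of $\Theta$, the nonempty open set $(fU_1) \cap \varphi^{-1}(V_0)$ contains an element $\theta \in \Theta$. Since $\varphi(\theta) \in V_0 \subset \varphi(\Omega)$, there is $\omega \in \Omega$ with $\varphi(\omega) = \varphi(\theta)$, which by construction decomposes as $\omega = \omega_0 u$ with $\omega_0 \in \Omega \cap \mathcal{F}$ and $u \in U_2$. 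Set $\theta' := \theta u^{-1}$. Then $u = \omega_0^{-1}\omega \in \Theta$ (since $\omega_0, \omega \in \Omega \subset \Theta$), so $\theta' \in \Theta$; a direct computation gives $\varphi(\theta') = \varphi(\theta)\varphi(u)^{-1} = e$ using $\varphi(\omega_0) = e$ and $\varphi(\omega) = \varphi(\theta)$, so $\theta' \in \mathcal{F}$; and writing $\theta = fv$ with $v \in U_1$ we get $\theta' = fvu^{-1} \in fU_1 U_2^{-1} \subset fU_0 = W$. Thus $\theta' \in \mathcal{F} \cap \Theta \cap W$, as required.

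The hard part is precisely the issue flagged in the first paragraph: without the upper-semicontinuity step the correction factor $\omega$ is only constrained to lie over $\varphi(\theta)$, so the resulting $\theta\omega^{-1}$ need not be close to $f$. The combination of the compactness of $\Omega$ and the inclusion $\Omega \cap \mathcal{F} \subset \Theta$ is exactly what lets us control the ambiguity: it can be absorbed into a product of a ``small'' factor $u \in U_2$ and an element $\omega_0 \in \Omega \cap \mathcal{F} \subset \mathcal{F} \cap \Theta$, and only the small factor $u^{-1}$ ends up modifying $\theta$.
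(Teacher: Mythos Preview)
Your proof is correct. Both your argument and the paper's implicitly assume that $\mathcal{G}_2$ is Hausdorff (you need it so that $\varphi(K)$ is closed; the paper needs it so that $\mathcal{F}=\varphi^{-1}(e)$ is closed), and in the intended application $\mathcal{G}_2=\overline{\Gamma}$ is profinite, so this is harmless.

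The route, however, is genuinely different from the paper's. You run a pointwise approximation--correction scheme and isolate the compactness input as a separate ``upper-semicontinuity'' claim: every $\omega\in\Omega$ with $\varphi(\omega)$ close to $e$ factors as $\omega_0 u$ with $\omega_0\in\Omega\cap\mathcal{F}$ and $u$ small. The paper instead proceeds globally in one stroke: it observes the set identity $\Theta\cap\Omega\mathcal{F}=\Omega(\Theta\cap\mathcal{F})$ (which uses only that $\Omega\subset\Theta$ and $\Theta$ is a subgroup), combines it with the fact that a compact-times-closed product is closed to get $\mathcal{F}\subset\Omega\,\overline{\Theta\cap\mathcal{F}}$, and then notes that any such factorization of an element of $\mathcal{F}$ forces the $\Omega$-factor into $\Omega\cap\mathcal{F}\subset\Theta\cap\mathcal{F}$. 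The paper's argument is considerably shorter and avoids the auxiliary lemma; your argument, on the other hand, makes the mechanism transparent and packages the compactness input as a reusable statement. Both ultimately exploit exactly the same two hypotheses, $\Omega$ compact and $\Omega\subset\Theta$, in essentially the same way.
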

\begin{proof}
Since $\varphi(\Omega)$ is a neighborhood of the identity in $\mathcal{G}_2$, we can find an open set $U \subset \mathcal{G}_1$ such that
$$
\mathcal{F} \subset U \subset \varphi^{-1}(\varphi(\Omega)) = \Omega \mathcal{F}.
$$
Now since $\Theta$ is dense in $\mathcal{G}_1$, we have $U \subset \overline{\Theta \cap U},$ where the bar denotes the closure in $\mathcal{G}_1.$ Thus,
$$
\mathcal{F} \subset \overline{\Theta \cap U} \subset \overline{\Theta \cap \Omega \mathcal{F}}.
$$
But $\Theta \cap \Omega \mathcal{F} = \Omega (\Theta \cap \mathcal{F}),$ and since $\Omega$ is compact, the product $\Omega \overline{(\Theta \cap \mathcal{F})}$ is closed. So
$$
\mathcal{F} \subset \overline{\Theta \cap \Omega \mathcal{F}} \subset \Omega \overline{(\Theta \cap \mathcal{F})}.
$$
Since $\mathcal{F}$ is closed, we have $\overline{\Theta \cap \mathcal{F}} \subset \mathcal{F},$ so
$$
\mathcal{F} = (\Omega \cap \mathcal{F}) \overline{(\Theta \cap \mathcal{F})} \subset (\Theta \cap \mathcal{F}) \overline{(\Theta \cap \mathcal{F})} = \overline{\Theta \cap \mathcal{F}},
$$
as required.
\end{proof}
In order to apply Lemma \ref{L-5} in our situation, we noted the following simple fact
\begin{lemma}\label{L-6}
The subgroup $\Delta \subset \widehat{\Gamma}$ (algebraically) generated by the $\widehat{\Gamma}_{\m}$ for all $\m \in \M$ is dense. Consequently, for any $\m \in \M,$ the subgroup $\Delta_{\m} = \widehat{\Gamma}_{\m} \widehat{\Gamma}_{\m}' \subset \widehat{\Gamma}$ is dense.
\end{lemma}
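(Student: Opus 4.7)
The plan is to reduce the density of $\Delta$ in $\widehat{\Gamma}$ to the observation that every element of $\widehat{R}$ is, in the topology of Lemma \ref{L-1}, arbitrarily well approximated by its finitely supported truncations in the product decomposition $\widehat{R} = \prod_{\m \in \M} R_\m$. Since the image of $\Gamma = E(R)$ is dense in $\widehat{\Gamma}$ by construction and is algebraically generated by the elementaries $e_\alpha(r)$ ($\alpha \in \Phi$, $r \in R$), whose images in $\widehat{\Gamma}$ are precisely the elements $\widehat{e}_\alpha(\hat{r})$, it suffices to show that each such $\widehat{e}_\alpha(\hat{r})$ lies in $\overline{\Delta}$.

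Fix $\alpha \in \Phi$ and $\hat{r} = (r_\m)_{\m \in \M} \in \widehat{R}$, and pick a basic neighborhood of zero in $\widehat{R}$ of the form
\[
\overline{\ba} = \m_1^{\alpha_1} R_{\m_1} \times \cdots \times \m_n^{\alpha_n} R_{\m_n} \times \prod_{\m \neq \m_i} R_\m,
\]
as described in Lemma \ref{L-1}. I would then consider the truncation $s \in \widehat{R}$ whose $\m_i$-component equals $r_{\m_i}$ for $i = 1, \ldots, n$ and whose remaining components vanish. One checks immediately that $s - \hat{r} \in \overline{\ba}$. Writing $s = s_1 + \cdots + s_n$ with each $s_i \in R_{\m_i} \subset \widehat{R}$, the additivity of $\widehat{e}_\alpha$ established at the start of \S 4 yields
\[
\widehat{e}_\alpha(s) \,=\, \widehat{e}_\alpha(s_1) \cdots \widehat{e}_\alpha(s_n) \,\in\, \widehat{\Gamma}_{\m_1} \cdots \widehat{\Gamma}_{\m_n} \,\subset\, \Delta.
\]
Letting $\overline{\ba}$ shrink and invoking the continuity of $\widehat{e}_\alpha$, we get $\widehat{e}_\alpha(\hat{r}) \in \overline{\Delta}$, which proves the first assertion.

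The second assertion then comes essentially for free: for every $\n \in \M \setminus \{\m\}$, we have $R_\n \subset R_\m' = \prod_{\n' \neq \m} R_{\n'}$, hence $\widehat{\Gamma}_\n \subset \widehat{\Gamma}_\m'$; together with $\widehat{\Gamma}_\m \subset \Delta_\m = \widehat{\Gamma}_\m \widehat{\Gamma}_\m'$, this gives $\Delta \subset \Delta_\m$, so the density of $\Delta_\m$ in $\widehat{\Gamma}$ is inherited from that of $\Delta$. I do not anticipate any genuine obstacle here; the argument is a direct combination of the product structure of $\widehat{R}$ supplied by Lemma \ref{L-1} with the continuity and additivity of the $\widehat{e}_\alpha$. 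The conceptual point worth highlighting is that without the identification of $\widehat{R}$ as an honest topological product of the local factors $R_\m$, there would be no meaningful notion of a ``finitely supported'' approximation of $\hat{r}$ to feed through $\widehat{e}_\alpha$.
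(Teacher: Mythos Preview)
Your proof is correct and follows essentially the same approach as the paper: the paper simply packages your finite-truncation argument into the observation that the subring $R_0 := \sum_{\m \in \M} R_\m$ is dense in $\widehat{R}$ and that $\Delta \supset \widehat{e}_\alpha(R_0)$ by additivity, so $\overline{\Delta} \supset \widehat{e}_\alpha(\widehat{R})$ by continuity. Your deduction of the second assertion from $\widehat{\Gamma}_\n \subset \widehat{\Gamma}_\m'$ for $\n \neq \m$ (hence $\Delta \subset \Delta_\m$) is exactly the paper's reasoning as well.
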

\begin{proof}
Let
$$
R_0 := \sum_{\m \in \M} R_{\m} \subset \widehat{R} = \prod_{\m \in \M} R_{\m}.
$$
Clearly $R_0$ is a dense subring of $\widehat{R}.$ On the other hand, $\Delta$ obviously contains $\widehat{e}_{\alpha} (R_0)$ for any $\alpha \in \Phi.$ So, the closure $\overline{\Delta}$ contains $\widehat{e}_{\alpha} (R)$ for all $\alpha \in \Phi$, and therefore coincides with $\widehat{\Gamma},$ yielding our first assertion. Furthermore, for any $\m \in \M,$ the subgroup $\Delta_{\m}$ contains $\Gamma_{\n}$ for all $\n \in \M,$ so our second assertion follows.
\end{proof}

\vskip1mm

\noindent {\it Conclusion of the proof of the Main Theorem}: Fix $\m \in \M.$ We have already seen that $\widehat{\Gamma}_{\m}$ centralizes $C \cap \Delta_{\m}.$ We claim that $C \cap \Delta_{\m}$ is dense in $C$, and hence $\widehat{\Gamma}_{\m}$ centralizes $C$. Indeed, by Lemma~\ref{L-6}, $\Delta_{\m}$ is dense in $\widehat{\Gamma}.$ On the other hand, it follows from Corollary \ref{C-1} that there exists a string of roots $(\alpha_1, \dots, \alpha_L)$ such that the map
$$
\widehat{R}^L \to \overline{\Gamma}, \ \ \ \ \ (r_1, \dots, r_L) \mapsto \prod_{i=1}^L \overline{e}_{\alpha_i} (r_i)
$$
is surjective. Then
$$
\Omega := \widehat{e}_{\alpha_1} (\widehat{R}) \cdots \widehat{e}_{\alpha_L} (\widehat{R}) = \left( \widehat{e}_{\alpha_1} (R_{\m}) \cdots \widehat{e}_{\alpha_L} (R_{\m}) \right) \left( \widehat{e}_{\alpha_1} (R_{\m}') \cdots \widehat{e}_{\alpha_L} (R_{\m}') \right)
$$
is a compact subset of $\widehat{\Gamma}$ that is contained in $\Delta_{\m}$ and has the property that $\pi(\Omega) = \overline{\Gamma}.$ Invoking Lemma \ref{L-5}, we obtain that $C \cap \Delta_{\m}$ is dense in $C$, as required.

We now see that $\widehat{\Gamma}_{\m}$ centralizes $C$ for all $\m \in \M.$ Since the subgroup $\Delta \subset \widehat{\Gamma}$ generated by the $\widehat{\Gamma}_{\m}$ is dense in $\widehat{\Gamma}$ by Lemma \ref{L-6}, we obtain that $\widehat{\Gamma}$ centralizes $C$, completing the proof.
\hfill $\Box$

\vskip2mm

To put our proof of the Main Theorem into perspective, we recall the following criterion for the centrality of the congruence kernel in the context of the congruence subgroup problem for algebraic groups over global fields (see \cite{PR}, Theorem 4). Let $G$ be an absolutely almost simple simply connected algebraic group over a global field $K$, and $S$ be a set of places of $K$, which we assume to contain all archimedean places if $K$ is a number field, such that the corresponding $S$-arithmetic group $G(\mathcal{O}_S)$ is infinite (where $\mathcal{O}_S$ is the ring of $S$-integers in $K$). Then by the Strong Approximation Theorem, the $S$-congruence completion $\overline{G(K)}$ of the group $G(K)$ of $K$-rational points can be identified with the group of $S$-adeles $G(\mathbb{A}_S)$, and in particular the group $G(K_v)$, for $v \notin S$, can be viewed as a subgroup of $\overline{G(K)}.$ Assume furthermore that $S$ contains no nonarchimedean anisotropic places for $G$ and that $G/K$ satisfies the Margulis-Platonov conjecture. If for each $v \in S$, there exists a subgroup $H_v$ of the $S$-arithmetic completion $\widehat{G(K)}$ such that

\vskip1mm

(1) \parbox[t]{16cm}{$\pi (H_v) = G(K_v)$ for all $v \notin S$, where $\pi \colon \widehat{G(K)} \to \overline{G(K)}$ is the canonical projection;}

\vskip1mm

(2) \parbox[t]{16cm}{$H_{v_1}$ and $H_{v_2}$ commute elementwise for $v_1 \neq v_2$;}

\vskip1mm

(3) \parbox[t]{16cm}{the $H_v$, for $v \notin S$, (algebraically) generate a dense subgroup of $\widehat{G(K)}$,}

\vskip1mm

\noindent then the congruence kernel $C^S(G) := \ker \pi$ is central. So, this criterion basically states that in the arithmetic situation, the mere existence of elementwise commuting lifts of ``local groups" implies the centrality of the congruence kernel. In our situation, the existence of elementwise commuting lifts (which we denoted $\widehat{\Gamma}_{\m}$ above) also plays a part in the proof of centrality (cf. Lemma \ref{L-4}(3)), but some additional considerations (such as the result of Stein and the bounded generation property for $E(\widehat{R}) = G(\widehat{R})$) are needed; the facilitating factor in the arithmetic situation is the action of the group $G(K)$ on the congruence kernel, which is not available over more general rings. 

Finally, we will relate our result on the centrality of the congruence kernel $C(\Gamma)$ for $\Gamma = E(R)$ to the congruence subgroup problem for $G(R).$ We have the following commutative diagram induced by the natural embedding $\Gamma \hookrightarrow G(R)$:
$$
\xymatrix{1 \ar[r] & C(\Gamma) \ar[r] \ar[d]_{\alpha} & \widehat{\Gamma} \ar[d]_{\beta} \ar[r]^{\pi^{\Gamma}} & \overline{\Gamma} \ar[d]_{\gamma} \ar[r] & 1 \\ 1 \ar[r] & C(G(R)) \ar[r] & \widehat{G(R)} \ar[r]^{\pi^{G(R)}} & \overline{G(R)} \ar[r] & 1}
$$
We note that by Proposition \ref{P-2}, $\gamma$ is an isomorphism. So, $\alpha(C(\Gamma)) = C(G(R)) \cap \beta (\widehat{\Gamma})$, and $\beta (\widehat{\Gamma})$ coincides with the closure $\check{\Gamma}$ of $\Gamma$ in $\widehat{G(R)}$. Thus, our Main Theorem yields the following
\begin{cor}
$C(G(R)) \cap \check{\Gamma}$ is centralized by $\check{\Gamma}.$ 
\end{cor}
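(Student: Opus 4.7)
The plan is to transport the centrality of $C(\Gamma)$ in $\widehat{\Gamma}$ (the Main Theorem) along the continuous homomorphism $\beta \colon \widehat{\Gamma} \to \widehat{G(R)}$ coming from the commutative diagram. Since any group homomorphism carries commuting pairs to commuting pairs, applying $\beta$ to the relation ``$\widehat{\Gamma}$ centralizes $C(\Gamma)$'' immediately yields that $\beta(\widehat{\Gamma})$ centralizes $\beta(C(\Gamma))$ inside $\widehat{G(R)}$. The task then reduces to identifying these two images with $\check{\Gamma}$ and $C(G(R)) \cap \check{\Gamma}$, respectively.

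For the first identification, $\beta(\widehat{\Gamma})$ is the continuous image of a compact group, hence a closed subgroup of $\widehat{G(R)}$; it contains the image of $\Gamma$, which is dense in $\check{\Gamma}$ by definition, so $\beta(\widehat{\Gamma}) = \check{\Gamma}$ --- exactly as observed in the paragraph preceding the Corollary. For the second identification, the commutativity of the left square of the diagram shows that, upon identifying $C(\Gamma)$ and $C(G(R))$ with their images in $\widehat{\Gamma}$ and $\widehat{G(R)}$ respectively, the restriction $\beta\vert_{C(\Gamma)}$ coincides with $\alpha$. A short diagram chase on the two exact rows, using that $\gamma$ is an isomorphism by Proposition \ref{P-2}, gives $\alpha(C(\Gamma)) = C(G(R)) \cap \beta(\widehat{\Gamma}) = C(G(R)) \cap \check{\Gamma}$, so $\beta(C(\Gamma)) = C(G(R)) \cap \check{\Gamma}$.

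Combining these two identifications with the image-of-centralization observation, one obtains that $\check{\Gamma}$ centralizes $C(G(R)) \cap \check{\Gamma}$, which is precisely the Corollary. There is no substantive obstacle beyond the Main Theorem itself: the argument is a purely formal translation of its conclusion along $\beta$, relying only on the commutative diagram and on the already established fact that $\gamma$ is an isomorphism.
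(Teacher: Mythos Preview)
Your proof is correct and follows exactly the same approach as the paper: the paper derives the Corollary directly from the Main Theorem via the commutative diagram, using that $\gamma$ is an isomorphism (Proposition \ref{P-2}) to get $\alpha(C(\Gamma)) = C(G(R)) \cap \beta(\widehat{\Gamma})$ and that $\beta(\widehat{\Gamma}) = \check{\Gamma}$. You have simply spelled out the short diagram chase and the compactness argument that the paper leaves implicit.
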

The exact relationship between $C(G(R))$ and $C(G(R)) \cap \check{\Gamma}$ (or $C(\Gamma)$) remains unclear except in a few cases. Matsumoto \cite{M1} showed that $G(R) = E(R)$ for any ring $R$ of algebraic $S$-integers, which combined with our Main Theorem and the remark at the end of \S 3, yields the centrality of $C(E(R)) = C(G(R))$, established by Matsumoto himself. Furthermore,
for $G = SL_n$ $(n \geq 3)$ and $R = \Z[x_1, \dots, x_k]$, by a result of Suslin \cite{Su}, we again have $G(R) = E(R),$ so $C(G(R)) = C(E(R))$ is central in $\widehat{E(R)} = \widehat{G(R)},$ 
which was established in \cite{KN}. On the other hand, there exist principal ideal domains $R$ for which $SL_n (R) \neq E(R)$ (cf. \cite{G}, \cite{I}), and then the analysis of $C(G(R))$ requires more effort. We only note that if $\Gamma = E(R)$ has finite index in $G(R)$, then the profinite topology on $\Gamma$ is induced by the profinite topology of $G(R)$, which implies that $\beta$ is injective, and therefore $C(\Gamma)$ is identified with a finite index subgroup of $C(G(R)).$

\vskip2mm

\noindent {\bf Acknowledgements.} The first-named author was partially supported by NSF grant DMS-0965758 and the Humboldt Foundation. The paper was finalized when both authors were visiting SFB 701 (Bielefeld), whose hospitality is gratefully acknowledged.

\bibliographystyle{amsplain}

\end{document}